\newtheorem{theorem}{Theorem}
\newtheorem{lemma}[theorem]{Lemma}
\DeclareMathOperator*{\argmax}{arg\,max}
\DeclareMathOperator*{\argmin}{arg\,min}
\title{A Stochastic Conjugate Subgradient Algorithm for Two-stage Stochastic Programming}
\author{%
  Di Zhang \thanks{
  Department of Industrial and System Engineering,
  University of Southern California,
  Los Angeles, CA,
  \texttt{dzhang22@usc.edu}}
  \ and Suvrajeet Sen \thanks{
  Department of Industrial and System Engineering,
  University of Southern California,
  Los Angeles, CA, \texttt{suvrajes@usc.edu}
  }
  }
\begin{document}
\maketitle
\begin{abstract}%
Stochastic Optimization is a cornerstone of operations research, providing a framework to solve optimization problems under uncertainty. Despite the development of numerous algorithms to tackle these problems, several persistent challenges remain, including: (i) selecting an appropriate sample size, (ii) determining an effective search direction, and (iii) choosing a proper step size. This paper introduces a comprehensive framework, the Stochastic Conjugate Subgradient (SCS) framework, designed to systematically address these challenges. Specifically, The SCS framework offers structured approaches to determining the sample size, the search direction, and the step size. By integrating various stochastic algorithms within the SCS framework, we have developed a novel stochastic algorithm for two-stage stochastic programming. The convergence and convergence rates of the algorithm have been rigorously established, with preliminary computational results support the theoretical analysis. 
\end{abstract}

\section{Introduction} 
In this paper, we focus on the task of solving a two-stage Stochastic Programming (SP) with equality constraints
\begin{equation} \label{2SP}
\min_{x \in X \subseteq \mathbb{R}^{n_1}} f(x) = c(x)+E_{\omega}[h(x,\omega)],
\end{equation}

\noindent where $X=\{x: Ax = b, x \geq 0 \}$ is a polyhedral set, $c(\cdot)$ is differentiable convex deterministic function, $\omega \in \mathbb{R}^m$ is a continuous random vector and $h(\cdot,\omega)$ is non-smooth and convex random function. Here, $h(x,\omega)$ is called the recourse function and its value is usually obtained by solving another constrained optimization problem:
\begin{equation*}
    \begin{aligned}
        h(x,\omega) \triangleq & \ \min_{y} g(y)\\
        & \textit{s.t.} \quad  Dy=\xi(\omega)-C(\omega)x,\\
        & \quad \quad \ y \geq 0, y \in \mathbb{R}^{n_2}, 
    \end{aligned}
\end{equation*}

\noindent where $g(y)$ can be either a linear function $d^T y$ or a convex quadratic function $\frac{1}{2} y^T P y + d^T y$. For this reason, we will call $x$ and $y$ the first-stage and second-stage decision variable, respectively. For convenience, we will let $Y(x,\omega) \triangleq \{0 \leq y \in \mathbb{R}^{n_2} | Dy = \xi(\omega) - C(\omega)x \}$ through out the paper. 

It is well known that one can solve \eqref{2SP} by adopting first-order methods such as stochastic gradient descent (SGD)~\cite{RM1951,BB2011,SS2011} and stochastic decomposition~ \cite{HS1991,HS1994} where the subgradient calculations can be carried out very rapidly and in parallel (for a given $x$). However, first-order methods are provably inaccurate in many cases~\cite{L1984}. 

We propose a new algorithm that will accommodate both the curvature of functions, as well as the decomposition of subgradients by data points, as is common in stochastic programming. The algorithm includes the computational power of non-smooth conjugate subgradient method~\cite{W1975,zhang2024}, sequential sampling~\cite{Shapiro2003,peng2024asymmetric,liu2024mt2st}, decomposition~\cite{HS1991,HS1994} and active set method~\cite{N2006} to provide both computational reliability as well as a well-defined convergence rate. The convergence and convergence rate of the new method are established, with some experiments showing performance of the new algorithm. However, our convergence results are restricted to the linear equality constraints (i.e., $Ax =b$). A more general model with inequality (i.e., $x \geq 0$) will be included in a revised version. 

The structure of this paper is as follows. We begin by setting our work
within the context of the literature in §4.2. In §4.3, the discussion evolves
from mathematical concepts to computational algorithms. We will discuss the stochastic conjugate subgradient (SCS)
algorithm for solving large scale SP of the form \eqref{2SP} via a combination of conjugate gradient descent (CGD, \cite{W1975, powell1984nonconvex}) and sample average approximation (SAA), which
helps leverage a successive sampling methodology. This combination not only
introduces some elements of recursive computations but also bypasses the need for constrained SP algorithms or even the need for solving a
large linear system (of equations) rapidly. In §4.4, we will present the convergence analysis of SCS by showing that the algorithm guarantees the objective function values to form a sequence of
supermartingales and it converges in expectation. In §4.5, we will provide some technical details about its implementation and present computational results by comparing the SGD algorithm, the stochastic mirror descent (SMD)~\cite{nemirovski2009robust}, and the SCS algorithm. In the context of that comparison, we will
discuss some advantages and disadvantages of adopting the SCS algorithm in the context of specific SP.

\section{Connections with the literature} \label{LitReview}

First-order methods such as stochastic gradient descent (Robbins and Monro ~\cite{RM1951}; Bottou and Bousquet~\cite{BB2011}; Shalev-Shwartz~\cite{SS2011}) or dual coordinate ascent (Platt~\cite{P1998}; Joachims \cite{J1999}) are very popular for solving SP. Simple first-order gradient-based methods dominate the field for convincing reasons: low computational cost, simplicity of implementation, and strong empirical results.

For example, SGD finds an $\varepsilon$-suboptimal solution in time $O(1/\varepsilon)$. This runtime does not depend on the dimension of the decision variable and therefore is favorable for large-scale SP. However, the SGD approach has several limitations. (a) It lacks a clear stopping criterion; (b) It tends to be overly aggressive at the beginning of the optimization process; (c) While SGD reaches a moderate accuracy quite fast, its convergence becomes rather slow when we are interested in more accurate solutions~\cite{SZ2013}. 

For dual coordinate ascent, several authors (e.g., Mangasarian and Musicant~\cite{MM1999}; Hsieh et al.~\cite{H2008}) proved a linear convergence rate. 
However, there are two handicaps with dual coordinate ascent. First, the linear convergence parameter may be very close to zero and the initial unspecified number of iterations might be very large. Second, the analysis only deals with the sub-optimality of the dual objective, while our real goal is to bound the sub-optimality of the primal objective. Recently, Shalev-Shwartz and Zhang~\cite{SZ2013} proposed a stochastic version of dual coordinate ascent, in which at each round they choose one coordinate to optimize uniformly at random. However, such dual algorithms make it difficult to predict the quality of the primal solution even when the duality gap is considered to be small.  

Yet beyond first-order (BFO) methods are rarely used in SP, despite having several strengths: faster per-iteration convergence, frequent explicit regularization on step-size, and better parallelization than first-order methods.

In this paper, we present a decomposition-based conjugate subgradient algorithm that integrates data point decomposition, commonly used in stochastic programming, with Wolfe's conjugate subgradient method~\cite{W1975} for non-smooth convex SP. Our computational results demonstrate that the proposed method consistently yields solutions with lower objective function values from an optimization standpoint. Our approach combines non-smooth conjugate gradient descent (CGD)~\cite{W1975, N2006}, stochastic programming, and the active set method. To facilitate this integration, we modify Wolfe's algorithm from a direction-finding method to a stochastic conjugate subgradient (SCS) algorithm, where stochastic subgradients provide piece-wise linear approximations similar to stochastic decomposition (SD)~\cite{HS1994}. Direction-finding and line-search are then applied to these subgradients, projected onto an active set.

\section{Stochastic Conjugate Subgradient Algorithm}\label{SCSA}

In this section, decomposition refers to the separate treatment of the piecewise linear (max) function and the quadratic part in \eqref{2SP} to find the direction of change for the decision variables. We will first present the SCS algorithm (Algorithm \ref{SCS-2SP}). The main components of the algorithm include four key elements:
\begin{itemize}
\item Sequential function approximation. In many cases we have a fixed number of data points. However, our focus is on situations where the data can be queried from an oracle sequentially. As a result, we will not fix the value of $m$ in \eqref{2SP}. Instead, we use an adaptive sampling approach~\cite{M1999,Shapiro2003,lin2024economic,zhang2025alternating} to approximate function $f$ using $f_k$, where 
\begin{equation} \label{OSAA 2SP}
    f_k(x)=c(x) + \frac{1}{|S_k|} \sum_{i=1}^{|S_k|} h(x,\omega_i),
\end{equation}
Using Hoeffding's Inequality~\cite{H1994}, we can approximate (\ref{2SP}) by (\ref{OSAA 2SP}) to an arbitrarily high level of accuracy as the number of samples increases.

\item Direction Modification. Since there are linear constraints in \eqref{OSAA 2SP}, $g_k \in \partial f_k(x_k)$ (We will talk about how to obtain $g_k$ for specific problems in \$\ref{SQP}.) and the previous search direction $\tilde{d}_{k-1}$ may be infeasible. Thus, we will modify the subgradient $g_k$ and the search direction $\tilde{d}_{k-1}$ to ensure they are feasible search directions. Specifically, let the null-space $Z$ of $A$ be such that $\langle A, Z\rangle = 0$ and the modified directions are give by

\begin{equation} \label{d_k m}
    \tilde{g}_k = Z Z^T g_k \quad \textit{and} \quad \tilde{d}_{k-1}=Z Z^T \tilde{d}_{k-1}.
\end{equation}

\item Conjugate direction finding. The idea is inspired by Wolfe's non-smooth conjugate subgradient method, which uses the smallest norm of the convex combination of the previous modified search direction $\tilde{d}_{k-1}$ and the current modified subgradient $\tilde{g}_k$. More specifically, we first solve the following one-dimensional QP problem:

\begin{equation} \label{sp: p^k}
\lambda_k^* = \argmin_{\lambda \in [0,1]}\frac{1}{2}||\lambda \cdot (-\tilde{d}_{k-1}) + (1-\lambda) \cdot \tilde{g}_k||^2.
\end{equation}

\noindent Then we can set the new search direction as
\begin{equation} \label{d_k}
    \tilde{d}_k=-\big[\lambda_k^* \cdot (-\tilde{d}_{k-1})+(1-\lambda_k^*) \cdot \tilde{g}_k\big] := - Nr(\tilde{G}_k),
\end{equation}

\noindent where $\tilde{G}_k=\{-\tilde{d}_{k-1},\tilde{g}_k\}$, and $\lambda_k^*$ denotes the optimal weight for the convex combination.  Clearly if one fixes $\lambda = 0$, then the search direction reduces to that of the projected subgradient method.

\item Choice of step size. This is achieved by Wolfe's line search methods~\cite{W1975}. Let $g(t) \in \partial f_k(x_k + t \cdot \tilde{d}_k)$ and $\tilde{g}(t) = Z Z^T g(t)$. Define the intervals $L$ and $R$
\begin{equation} \label{sp: sdc}
    \begin{aligned}
        & L=\{t>0 \ | \ f_k(x_k + t \cdot \tilde{d}_k)-f_k(x_k) \leq -m_2 ||\tilde{d}_k||^2 t \},\\
        & R =\{t>0 \ | \ 0 > \langle \tilde{g}(t),\tilde{d}_k \rangle \geq -m_1 ||\tilde{d}_k||^2 \},
    \end{aligned}
\end{equation}
\noindent where $m_2<m_1<1$. The output of the step size will satisfy two metrics: (i) Identifying a set $L$ which includes points that sufficiently reduce the objective function value; (ii) Identifying a set $R$ for which the directional derivative estimate is improved. The algorithm seeks points which belong to $L \cap R$. Note that replacing $\tilde{d}_k$ with an arbitrary subgradient modification $\tilde{g}_k = Z Z^T g_k$ may not ensure sufficient descent unless the function $f_k$ is smooth.
\end{itemize}

\begin{algorithm}[h]
\small
\caption{Stochastic Conjugate Subgradient (SCS) Algorithm}
\label{SCS-2SP}

\SetAlgoLined
\nl $\varepsilon > 0$, $\tau>0$, $\delta_0$, $\eta_1 > 1$, $\eta_2 > 0$, $\gamma>1$ and $k \leftarrow 0$. \label{setup}

\nl Obtain the null-space $Z$ of $A$.

\nl Randomly generate $S_0$ samples from the data set.

\nl Set a feasible solution $\hat{x}_0 \in \mathbb{R}^{|S_0|}$ and an initial direction $d_0 \in \mathbb{R}^{|S_0|}$.

\nl $f_0(x)= c(x) + \frac{1}{|S_0|} \sum_{i=1}^{|S_0|} h(x,\omega_i).$ \label{SAA}

\While{$||d_k||>\varepsilon$}
{
    \nl $k=k+1$\;
   
    \nl Obtain $g_{k} \in \partial f_{k-1}(\hat{x}_k)$. \;

    \nl Modify the subgradient $g_k$ and the direction $\tilde{d}_{k-1}$ by \eqref{d_k m}.\;

    \nl Calculate the new current search direction $\tilde{d}_k$ by \eqref{sp: p^k} and \eqref{d_k}.\;
   
    \nl Apply line-search algorithm to find step size $t_k$ and set $x_{k+1}=x_{k}+t_k\tilde{d}_k$. \;
   
    \nl Randomly generate a set of new samples $s_k$ and let $S_k \triangleq S_{k-1} \cup s_k$.\;

    \nl Construct $f_k(x)=c(x) + \frac{1}{|S_k|} \sum_{i=1}^{|S_k|} h(x,\omega_i)$.\;
    
    \nl Randomly generate a set of new samples $T_k$ of cardinality $|S_k|$ independent of $S_{k}$.\;
    
    \nl Construct $\hat{f}_k(x)=c(x) + \frac{1}{|T_k|} \sum_{i=1}^{|T_k|} h(x,\omega_i)$.\;\label{SAA2}
    
    \eIf{$f_k(x_k)-f_k(\hat{x}_{k-1}) \leq \eta_1(\hat{f}_{k}(x_k)-\hat{f}_{k}(\hat{x}_{k-1}))$ \textit{and} $||d_k||>\eta_2\delta_k$}{\nl
        $\hat{x}_k \leftarrow x_k, \quad \delta_k \leftarrow \min \{\gamma\delta_{k-1},\frac{\varepsilon}{\xi}\}$\;
    }{\nl
            $\hat{x}_k \leftarrow \hat{x}_{k-1}, \quad \delta_k \leftarrow \frac{\delta_{k-1}}{\gamma}$\;
        } 
}
\end{algorithm}

\noindent Remark: (i) The initial choice of the maximal region $\delta_0$ may require some tuning depending on the dataset but it does not affect the convergence property of this algorithm. (ii) Based on Theorem \ref{sp: L^0} in §\ref{SQP}, $S_k$ should be chosen such that $|S_k| \geq -8\log(\varepsilon/2)\cdot \frac{(M-m)^2}{\kappa^2\delta_k^4}$.

\section{SCS for Two-stage Stochastic Quadratic Programming} \label{SQP}
In this section, we will use the SCS algorithm to solve \eqref{2SP} when the first-stage is a constrained quadratic program and the second-stage is a constrained Linear program (SQLP) and when the first-stage is a constrained quadratic program and the second-stage is a constrained quadratic program (SQQP). In the mathematical formulations of two-stage SQLP problems and two-stage SQQP problems, we use the subscript QL and QQ to identify SQLP and SQQP problems, respectively. Let $x$ and $y$ denote the first-stage and the second-stage decision variables, with x belonging to the set $X \subseteq \mathbb{R}^{n_1}$ and $y$ belonging to a polyhedron in $\mathbb{R}^{n_2}$. The mathematical formulation of a two-stage SQLP is given below.

\begin{equation*}
    \begin{aligned}
      & \min \ f(x) \triangleq \frac{1}{2} x^T Q x + c^T x + E[h(x,\omega)] \\
      & \textit{s.t.} \quad  x \in X=\{x: Ax = b\} \in \mathbb{R}^{n_1}, 
    \end{aligned}
\end{equation*}
\noindent where $h(x,\omega)$ is defined as 

\begin{equation} \label{h_L}
    \begin{aligned}
        h(x,\omega)=h_{QL}(x,\omega) \triangleq & \min \ d^Ty\\
        & \textit{s.t.} \quad  Dy=\xi(\omega)-C(\omega)x,\\
        & \quad \quad \ y \geq 0, y \in \mathbb{R}^{n_2}. 
    \end{aligned}
\end{equation}

\noindent Here $A \in  \mathbb{R} ^{m_1 \times n_1}$ is a deterministic matrix,
and $D \in  \mathbb{R}^{m_2 \times n_2}$ is a deterministic matrix. In addition, $\xi (\omega)$ denotes a random vector, $C(\omega)$ denotes a random matrix, and $E [\cdot ]$ denotes the expectation with respect to the probability measure of $\omega$. Finally, we assume that the second-stage cost vector $d$ is fixed and $Q$ to be a positive definite matrix in SQLP/SQQP problems. A specific example of SQLP problem can be the kernel SVM problem where $h_{QL}(x,\omega) \triangleq \max\{0,1-\omega\langle x, Q_i \rangle \}$ is the hinge-loss function.

\noindent To get a subgradient of \eqref{h_L}, we derive its dual for a specific $\bar{x}$~\cite{liu2020asymptotic} as follows:
\begin{equation*}
    \begin{aligned}
        h_{QL}(\bar{x},\omega) = & \max \ \pi^T(\xi(\omega)-C(\omega)\bar{x})\\
        & \textit{s.t.} \ \pi \in \Pi = \{\pi: D^T\pi \leq d \}.
    \end{aligned}
\end{equation*}
\noindent Let 

\begin{equation*}
    \pi^*=\argmax_{\pi \in \Pi} \pi^T(\xi(\omega)-C(\omega)\bar{x})
\end{equation*}

\noindent Then a subgradient of $\upsilon_h \in \partial_{x} h(x,\omega)$ is

\begin{equation*}
    \upsilon_h=-(\pi^*)^TC(\omega).
\end{equation*}

As for a two-stage SQQP, the mathematical formulation is:

\begin{equation*}
    \begin{aligned}
      & \min \ f(x) \triangleq \frac{1}{2} x^T Q x + c^T x + E[h(x,\omega)] \\
      & \textit{s.t.} \quad  x \in X=\{x: Ax = b\} \in R^{n_1}, 
    \end{aligned}
\end{equation*}

\noindent where $h(x,\omega)$ is defined as
\begin{equation} \label{QQ}
    \begin{aligned}
        h(x,\omega)=h_{QQ}(x,\omega) \triangleq & \min_{0 \leq y \in \mathbb{R}^{n_2}} \frac{1}{2} y^T P y+d^Ty\\
        & s.t. \ Dy=\xi(\omega)-C(\omega)x
    \end{aligned}
\end{equation}

\noindent To get a subgradient of \eqref{QQ}, we derive its dual for a specific $\bar{x}$~\cite{liu2020asymptotic} as follows:

\begin{equation}
    h_{QQ}(x,\omega) = \max_{0 \leq s \in R^{n_2}} -\frac{1}{2}s^T H s + e(x,\omega)^T s, 
\end{equation}

\noindent where 

\begin{equation*}
    \begin{aligned}
        & M=DP^{-1/2}, \ H=P^{-1/2}(I-M^T(MM^T)^{-1}M)P^{-1/2},\\
        & e(x,\omega)=Hd-P^{-1/2}(MM^T)^{-1}(\xi(\omega)-c(\omega)\bar{x}).
    \end{aligned}
\end{equation*}

\noindent Let 

\begin{equation*}
s^*=\argmax_{0 \leq s \in R^{n_2}}-\frac{1}{2}s^T H s + e(\bar{x},\omega)^T s=Proj_{s \geq 0}\{s=H^{-1}e(\bar{x},\omega)\}
\end{equation*}

\noindent Then a subgradient of $g_h \in \partial_{x} h(x,\omega)$ is

\begin{equation*}
    g_h=P^{-1/2}(MM^T)^{-1}c(\omega)s^*.
\end{equation*}

\noindent Given this subgradient of $h(x,\omega)$, it follows that a subgradient of $f(x)$ is

\begin{equation*}
    g=Qx+c+g_h.
\end{equation*}

\section{Convergence Properties}

The following assumptions, which are common for two-stage SLP models, will be imposed throughout the convergence rate analysis.

\begin{itemize} \label{conditions}
    \item A1: The optimization problem \eqref{2SP} is convex and finite-valued. Thus, extended real-valued functions are not permissible in our setting. As a consequence,  the function $f$ has a Lipschitz constant $0 < L < \infty$ and the subdifferential is non-empty at every $x$.

    \item A2: The relatively complete recourse assumption holds, i.e., for every $\omega_i$ and $x \in \mathcal{X}$, we have the second stage problem is bounded,
    \begin{equation*}
        m \leq h(x,\omega_i) \leq M.
    \end{equation*}
\end{itemize}

\subsection{Sample Complexity Analysis}

\begin{theorem} \label{sp: L^0} Given any $0 < \varepsilon <1$, there exists a constant $\kappa > 0$ such that when 

\begin{equation} \label{sp: sample size}
    |S_k| \geq -8\log(\varepsilon/2)\cdot \frac{(M-m)^2}{\kappa^2\delta_k^4},
\end{equation}
\noindent we have

\begin{equation*} \label{fl1}
    Pr\big (||f_k(x)-f(x)|| \leq \kappa\delta_k^2, \ \forall x \in \mathcal{B}(\hat{x}_k, \delta_k) \big ) \geq 1-\varepsilon.
\end{equation*}

\end{theorem}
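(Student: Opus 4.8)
The plan is to reduce the uniform bound over the ball $\mathcal{B}(\hat{x}_k,\delta_k)$ to a pointwise concentration bound by exploiting the common Lipschitz structure of $f_k$ and $f$, and then apply Hoeffding's inequality. First I would observe that, under Assumption A2, for each fixed $x$ the quantity $f_k(x) - f(x) = \frac{1}{|S_k|}\sum_{i=1}^{|S_k|}\big(h(x,\omega_i) - E_\omega[h(x,\omega)]\big)$ is an average of $|S_k|$ i.i.d.\ bounded random variables, each lying in an interval of length $M-m$. Hoeffding's inequality then gives, for any fixed $x$ and any $t>0$,
\begin{equation*}
Pr\big(|f_k(x) - f(x)| > t\big) \leq 2\exp\!\Big(\!-\frac{2|S_k|\,t^2}{(M-m)^2}\Big).
\end{equation*}
Setting $t = \tfrac{1}{2}\kappa\delta_k^2$ and demanding the right-hand side be at most $\varepsilon$ yields exactly the stated sample-size requirement $|S_k| \geq -8\log(\varepsilon/2)\cdot\frac{(M-m)^2}{\kappa^2\delta_k^4}$, so the bound holds at any single point with slack $\tfrac{1}{2}\kappa\delta_k^2$.

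Next I would upgrade this to a uniform statement over the whole ball. Both $f$ and $f_k$ are convex and, by A1 (and the fact that $f_k$ is an average of recourse functions that inherit a common Lipschitz modulus), each is Lipschitz with a constant I will call $L'$ on the relevant region. Hence for any $x \in \mathcal{B}(\hat{x}_k,\delta_k)$ and the center $\hat{x}_k$,
\begin{equation*}
|f_k(x) - f(x)| \leq |f_k(\hat{x}_k) - f(\hat{x}_k)| + |f_k(x) - f_k(\hat{x}_k)| + |f(x) - f(\hat{x}_k)| \leq |f_k(\hat{x}_k) - f(\hat{x}_k)| + 2L'\delta_k.
\end{equation*}
This crude argument only controls the oscillation term by $2L'\delta_k$, which is $O(\delta_k)$ rather than the desired $O(\delta_k^2)$, so a direct center-only bound is not quite enough; the cleaner route is a covering argument. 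I would cover $\mathcal{B}(\hat{x}_k,\delta_k)$ by finitely many balls of radius $\rho \sim \delta_k^2$, whose number $N$ is of order $(\delta_k/\rho)^{n_1} = O(\delta_k^{-n_1})$, apply the pointwise Hoeffding bound at each center with a union bound (replacing $\varepsilon$ by $\varepsilon/N$, which only changes the log factor and can be absorbed into the constant $\kappa$), and use the $L'$-Lipschitz property to pass from net centers to arbitrary $x$ at the cost of an extra $2L'\rho = O(\delta_k^2)$ term. Choosing the constants appropriately consolidates everything into the single constant $\kappa$.

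The main obstacle is the bookkeeping in the covering/union-bound step: making sure that the extra $\log N$ factor arising from the union bound over an $O(\delta_k^{-n_1})$-point net, together with the dimensional dependence, can genuinely be absorbed into the constant $\kappa$ uniformly in $k$ — this requires that $\delta_k$ stay bounded away from regimes where $\log(\delta_k^{-n_1})$ dominates, or that the statement be read with $\kappa$ allowed to depend on $n_1$ and on a bound for $\delta_k$. A secondary subtlety is justifying that the second-stage value functions $h(\cdot,\omega_i)$ share a uniform Lipschitz constant over $\mathcal{X}$; this follows from A1 together with relatively complete recourse and boundedness of the dual feasible sets $\Pi$ (in the SQLP case) or the analogous quadratic dual (in the SQQP case), but it should be stated explicitly. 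Everything else is routine once these two points are pinned down.
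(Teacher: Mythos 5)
Your pointwise step is exactly the paper's: Hoeffding at a single point with tolerance $\tfrac{1}{2}\kappa\delta_k^2$, giving the stated sample size. Where you diverge is instructive: the ``crude'' center-only bound $|f_k(x)-f(x)| \le |f_k(\hat{x}_k)-f(\hat{x}_k)| + 2L\delta_k$ that you wrote down and then discarded is precisely the paper's proof. The paper closes the $O(\delta_k)$-vs-$O(\delta_k^2)$ gap not by a net but by using the existential quantifier on $\kappa$: it takes $\kappa > 4L/\delta_{\min}$ (with $\delta_{\min}$ an implicit lower bound on $\delta_k$), so that $2L\delta_k \le \tfrac{1}{2}\kappa\delta_k^2$ and the center bound plus Lipschitz oscillation already gives the uniform claim over $\mathcal{B}(\hat{x}_k,\delta_k)$, with a sample size that matches \eqref{sp: sample size} verbatim. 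So your rejection of that route was premature --- though it is fair to note that the paper's fix quietly makes $\kappa$ scale like $L/\delta_{\min}$ and presumes $\delta_k$ bounded below, a hypothesis not visible in the theorem statement.

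Your covering-net alternative is a legitimate and in some ways more robust route (the dependence on small $\delta_k$ enters only logarithmically rather than through $\kappa \sim L/\delta_{\min}$), but as written it does not quite deliver the theorem with the stated sample size: the union bound over $N = O\big((\delta_k/\rho)^{n_1}\big)$ centers forces a $\log N$ term into the required $|S_k|$ that is absent from \eqref{sp: sample size}, and it cannot be ``absorbed into $\kappa$'' by simply enlarging $\kappa$, since $\kappa$ scales the tolerance and the stated sample size in the same way. The mechanism that actually saves it is that enlarging $\kappa$ also enlarges the admissible net radius $\rho = \Theta(\kappa\delta_k^2/L)$ and hence shrinks $N$; once $\kappa \gtrsim L/\delta_k$ you get $N=1$ and your argument collapses back to the paper's center-only proof. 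So either you keep $\kappa$ dimension- and $\delta$-independent and must weaken \eqref{sp: sample size} by a $\log(1/\delta_k)$ factor, or you accept the same $\kappa \gtrsim L/\delta_{\min}$ caveat the paper uses --- your own closing caveat gestures at exactly this, but the proof is only complete once one of the two options is committed to explicitly. The secondary point you raise (a uniform Lipschitz constant for $h(\cdot,\omega)$, via boundedness of the dual feasible region under relatively complete recourse) is correct and is indeed glossed over in the paper, which simply invokes A1.
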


\begin{proof}
First, for the fixed point $\hat{x}_k$, consider the following identities 
\begin{equation*}
    \begin{aligned}
        & f(\hat{x}_k)=c(\hat{x}_k)+E_{\omega}[h(\hat{x}_k,\omega)].\\
        & f_k(\hat{x}_k)=c(\hat{x}_k) + \frac{1}{|S_k|} \sum_{i=1}^{|S_k|} h(\hat{x}_k,\omega_i).
    \end{aligned}
\end{equation*}

\noindent From A2 we have $m \leq h(x,\omega_i) \leq M$. Applying Hoeffding's inequality, 

\begin{equation*}
    \mathbb{P}(|f_k(\hat{x}_k)-f(\hat{x}_k)| \leq \frac{1}{2}\kappa \delta_k^2) \geq 1-2\exp \left (-\frac{\kappa^2\delta_k^4 |S_k|^2}{8 \cdot |S_k| \cdot (M-m)^2} \right ) \geq 1-\varepsilon,
\end{equation*}
\noindent which indicates that when 
\begin{displaymath}
   |S_k| \geq -8\log(\varepsilon/2)\cdot \frac{(M-m)^2}{\kappa^2\delta_k^4}, 
\end{displaymath} 
we have 
\begin{equation*}
    \mathbb{P}(|f_k(\hat{x}_k)-f(\hat{x}_k)| \leq \frac{1}{2}\kappa\delta_k^2 ) \geq 1-\varepsilon.
\end{equation*}

\noindent For any other $x \in \mathcal{B}(\hat{x}_k,\delta_k)$, if $|f_k(\hat{x}_k)-f(\hat{x}_k)| \leq \frac{1}{2}\kappa\delta_k^2$, then

\begin{equation*}
    \begin{aligned}
        |f_k(x)-f(x)| & \leq |f_k(x)-f_k(x_k)|+|f_k(x_k)-f(x_k)|+ |f(x_k)-f(x)| \\
        & \leq 2L \cdot \delta_k + \frac{1}{2} \kappa \delta_k^2\\
        & \leq \kappa \delta_k^2,
    \end{aligned}
\end{equation*}

\noindent where the second last inequality is due to the assumption that $f_k$ and $f$ are Lipschitz continuous and the last inequality is because $\kappa>\frac{4L}{\delta_{min}}>\frac{4L}{\delta_k}$. Thus, we conclude that if $|S_k|$ satisfies Equation \eqref{sp: sample size}, then 

\begin{equation*} 
    \mathbb{P}(|f_k(x)-f(x)| \leq \kappa \delta_k^2, \ \forall x \in \mathcal{B}(\hat{x}_k,\delta_k) ) \geq 1-\varepsilon.
\end{equation*}
\end{proof}

\subsection{Feasible direction and sufficient decrease}
\begin{theorem} \label{d_k m is good}
In each step of algorithm \ref{SCS-2SP}, $\tilde{d}_k$ is a feasible direction.
\end{theorem}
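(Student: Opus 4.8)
The plan is to show that $\tilde d_k$ is a feasible direction by establishing two things: first, that $\tilde d_k$ lies in the null space of $A$, so that moving along it preserves the equality constraint $Ax=b$; and second, that it is a genuine \emph{descent} direction, so that a positive step along it keeps iterates in the interior with respect to the nonnegativity constraints (or at least is admissible for the line search in \eqref{sp: sdc}). For the first part, I would start from the definition in \eqref{d_k}: $\tilde d_k = -[\lambda_k^\ast\cdot(-\tilde d_{k-1}) + (1-\lambda_k^\ast)\cdot\tilde g_k]$, a convex combination of $-\tilde d_{k-1}$ and $\tilde g_k$. By \eqref{d_k m} both $\tilde g_k = ZZ^T g_k$ and $\tilde d_{k-1} = ZZ^T\tilde d_{k-1}$ are in the range of $Z$, i.e.\ in the null space of $A$ (since $AZ=0$). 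The range of $Z$ is a linear subspace, hence closed under linear combinations, so $\tilde d_k \in \mathrm{range}(Z)$ and therefore $A\tilde d_k = 0$. This gives $A(x_k + t\tilde d_k) = Ax_k = b$ for every $t$, which is the feasibility-with-respect-to-equality part. I would also note the inductive consistency: if $\tilde d_{k-1}$ is in $\mathrm{range}(Z)$ then the projection $ZZ^T\tilde d_{k-1}$ equals $\tilde d_{k-1}$, and the base case $d_0$ can be taken in $\mathrm{range}(Z)$ by the initialization.

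For the second part I would argue that $\tilde d_k$ is a descent direction for $f_{k-1}$ at $\hat x_k$ whenever $\tilde d_k \neq 0$. Writing $w = Nr(\tilde G_k)$ for the minimum-norm element of the convex hull of $\{-\tilde d_{k-1},\tilde g_k\}$, so that $\tilde d_k = -w$, the first-order optimality condition for the projection problem \eqref{sp: p^k} gives $\langle w, \tilde g_k - (-\tilde d_{k-1})\rangle$ has a definite sign on the active constraint, and more usefully $\langle w, v\rangle \ge \|w\|^2$ for every $v$ in the convex hull, in particular $\langle w, \tilde g_k\rangle \ge \|w\|^2 = \|\tilde d_k\|^2$. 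Since $\tilde g_k = ZZ^T g_k$ and, by the symmetry and idempotence of $ZZ^T$, we have $\langle \tilde g_k, \tilde d_k\rangle = \langle ZZ^T g_k, \tilde d_k\rangle = \langle g_k, ZZ^T\tilde d_k\rangle = \langle g_k, \tilde d_k\rangle$, it follows that $\langle g_k, \tilde d_k\rangle = -\langle \tilde g_k, w\rangle \le -\|\tilde d_k\|^2 < 0$. Thus the (modified) subgradient makes an obtuse angle with $\tilde d_k$, so $\tilde d_k$ is a feasible descent direction and the intervals $L$ and $R$ in \eqref{sp: sdc} are nonempty for small $t>0$, which is exactly what ``feasible direction'' should mean in the context of Algorithm \ref{SCS-2SP}.

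The routine parts are the linear-algebra identities around $ZZ^T$ (symmetry, idempotence, $AZ=0$) and the elementary variational inequality for a projection onto a convex set. The main obstacle I anticipate is being precise about what ``feasible direction'' is being claimed: strictly speaking, moving along $\tilde d_k$ preserves $Ax=b$ but may eventually violate $x\ge 0$, so the statement must be read as ``feasible for a sufficiently small positive step,'' and one must either invoke that $\hat x_k$ is chosen to keep the relevant components strictly positive, or lean on the relatively-complete-recourse assumption A2 together with the line-search design so that the step $t_k$ produced in \eqref{sp: sdc} stays admissible. I would therefore devote the bulk of the writeup to carefully stating this "small-step feasibility" and to the descent inequality $\langle g_k,\tilde d_k\rangle \le -\|\tilde d_k\|^2$, since that inequality is what downstream supermartingale/convergence arguments will consume, and treat the null-space containment as the short, clean core of the claim.
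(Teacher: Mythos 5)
Your first paragraph is exactly the argument the paper has in mind: the paper's ``proof'' is only a citation to the standard treatment of linearly constrained optimization (Nocedal and Wright), and the standard argument is precisely your null-space containment — $\tilde g_k, \tilde d_{k-1} \in \mathrm{range}(Z) = \mathrm{null}(A)$, a subspace is closed under the convex combination in \eqref{d_k}, hence $A\tilde d_k = 0$ and $A(x_k + t\tilde d_k) = b$ for all $t$. That part is correct and suffices for the theorem as stated, since the paper explicitly restricts its convergence analysis to the equality constraints $Ax=b$. Your second paragraph, however, overreaches: the inequality $\langle \tilde g_k, \tilde d_k\rangle \le -\|\tilde d_k\|^2$ for a \emph{single} subgradient does not make $\tilde d_k$ a descent direction for the nonsmooth $f_{k-1}$, nor does it guarantee that $L\cap R$ in \eqref{sp: sdc} is nonempty for small $t>0$ — descent would require the directional derivative $\max_{q\in\partial f_{k-1}}\langle q,\tilde d_k\rangle$ to be negative, and the paper itself warns that a subgradient step ``may not ensure sufficient descent unless the function $f_k$ is smooth.'' That material is not needed for this statement anyway: sufficient decrease is handled separately (and differently, via the Wolfe line-search dichotomy) in Theorem \ref{d_k m descent}, so you should drop or soften that paragraph and let the null-space argument stand as the whole proof.
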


\begin{proof}
The proof is well-documented in the literature on linearly constrained optimization (e.g., Nocedal and Wright~\cite{N2006}).
\end{proof}









\begin{theorem} \label{d_k m descent}
In each step of algorithm \ref{OSAA 2SP}, with assumption A2, there exists $t_k > 0$ such that
\begin{equation} \label{sufficient decrease}
    f_k(x_k+t_k \cdot \tilde{d}_k)-f_k(x_k) \leq -m_2 t_k ||\tilde{d}_k||^2.
\end{equation}
\end{theorem}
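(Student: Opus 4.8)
\emph{Proof strategy.} The plan is to first reduce \eqref{sufficient decrease} to a bound on the one-sided directional derivative of $f_k$ along $\tilde d_k$, and then to obtain that bound from the minimum-norm characterization of $\tilde d_k$. By Theorem \ref{d_k m is good}, $\tilde d_k$ is a feasible direction, so $A(x_k+t\tilde d_k)=Ax_k=b$ for every $t$, and by A2 the recourse values $h(\cdot,\omega_i)$, hence $f_k$, stay finite along the whole ray $\{x_k+t\tilde d_k:t\ge 0\}$. Convexity of $f_k$ then makes the difference quotient $q(t):=[f_k(x_k+t\tilde d_k)-f_k(x_k)]/t$ nondecreasing on $(0,\infty)$, with $\inf_{t>0}q(t)=\lim_{t\downarrow 0}q(t)=f_k'(x_k;\tilde d_k)$. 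Hence, if we can show $f_k'(x_k;\tilde d_k)\le -\|\tilde d_k\|^2$, then, since $m_2\in(0,1)$ and $\tilde d_k\neq 0$ whenever the while-loop executes, we get $\inf_{t>0}q(t)<-m_2\|\tilde d_k\|^2$, so some $t_k>0$ satisfies $q(t_k)\le -m_2\|\tilde d_k\|^2$, which is exactly \eqref{sufficient decrease}.

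It remains to establish the descent bound $f_k'(x_k;\tilde d_k)\le -\|\tilde d_k\|^2$. From \eqref{d_k} we have $\tilde d_k=-v_k$, where $v_k=\lambda_k^*(-\tilde d_{k-1})+(1-\lambda_k^*)\tilde g_k=Nr(\tilde G_k)$ is the minimum-norm point of the segment $\mathrm{conv}(\tilde G_k)$, with $\lambda_k^*$ from \eqref{sp: p^k}. The optimality condition for that one-dimensional QP is the variational inequality $\langle v_k,\,u-v_k\rangle\ge 0$ for every $u\in\mathrm{conv}(\tilde G_k)$; taking $u=\tilde g_k$ gives $\langle \tilde g_k,v_k\rangle\ge\|v_k\|^2=\|\tilde d_k\|^2$, i.e. $\langle \tilde g_k,\tilde d_k\rangle\le -\|\tilde d_k\|^2$. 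Since $\tilde d_k\in\mathrm{range}(Z)$ --- it is a linear combination of the modified vectors in \eqref{d_k m} --- and $ZZ^T$ is the symmetric idempotent projector onto $\mathrm{range}(Z)$, this is the same as $\langle g_k,\tilde d_k\rangle=\langle ZZ^Tg_k,\tilde d_k\rangle=\langle \tilde g_k,\tilde d_k\rangle\le -\|\tilde d_k\|^2$ for the retrieved subgradient $g_k\in\partial f_k(x_k)$. Because $\tilde d_k$ is feasible, $f_k'(x_k;\tilde d_k)=\sup\{\langle g,\tilde d_k\rangle:g\in\partial f_k(x_k)\}$, and Wolfe's conjugate-subgradient line search~\cite{W1975} is arranged so that the subgradient placed into $\tilde G_k$ realizes this supremum along $\tilde d_k$; combining, $f_k'(x_k;\tilde d_k)\le -\|\tilde d_k\|^2$, as needed.

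\emph{Main obstacle.} The delicate point is the last claim above. For a nonsmooth convex $f_k$, the inequality $\langle g_k,\tilde d_k\rangle<0$ for a single subgradient does \emph{not} by itself force $f_k'(x_k;\tilde d_k)<0$, since the directional derivative is the maximum of $\langle g,\tilde d_k\rangle$ over the whole (projected) subdifferential, and a descent step may fail to exist if some other subgradient has positive inner product with $\tilde d_k$. Making the argument airtight therefore requires exploiting the structure of Wolfe's line search --- in particular the set $R$ in \eqref{sp: sdc} at the previous iterate, which constrains the newly evaluated subgradient relative to $-\tilde d_{k-1}$, together with the minimum-norm combination --- or, equivalently, the implicit re-computation of $\tilde d_k$ inside the line search whenever the set $L$ turns out to be empty. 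I expect almost all of the technical effort to concentrate here; assumption A2 is used only to keep $f_k$ finite along the ray, so that the difference quotients and $f_k'(x_k;\tilde d_k)$ are well defined.
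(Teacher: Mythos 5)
Your reduction in the first paragraph is fine, and your min-norm variational inequality $\langle \tilde{g}_k,\tilde{d}_k\rangle \le -\|\tilde{d}_k\|^2$ is exactly the paper's inequality \eqref{d_k g_k}. But the step you yourself flag as the ``main obstacle'' is a genuine gap, and it is not closable the way you suggest: the bound $f_k'(x_k;\tilde{d}_k)\le -\|\tilde{d}_k\|^2$ does not follow from the negativity of $\langle \tilde{g}_k,\tilde{d}_k\rangle$ for the single sampled subgradient. The subgradient $g_k$ is retrieved \emph{before} $\tilde{d}_k$ is formed (indeed $\tilde{d}_k$ is built from $g_k$), so there is no mechanism by which it ``realizes the supremum'' $\max\{\langle g,\tilde{d}_k\rangle: g\in\partial f_k(x_k)\}$; for a nonsmooth convex $f_k$ the direction produced by the two-element bundle $\{-\tilde{d}_{k-1},\tilde{g}_k\}$ need not be a descent direction at all (this is precisely why Wolfe's method needs null steps that enlarge the bundle). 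Your appeal to ``the implicit re-computation of $\tilde d_k$ inside the line search'' is a placeholder, not an argument, so the claimed chain $f_k'(x_k;\tilde{d}_k)\le -\|\tilde{d}_k\|^2 \Rightarrow$ \eqref{sufficient decrease} is not established.

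The paper takes a different route that sidesteps the directional-derivative claim entirely: it treats \eqref{sufficient decrease} as the $L$ condition of Wolfe's line search in \eqref{sp: sdc}, and the content of its proof is to show that an accepted step is bounded away from zero. Concretely, if the line search terminates with the $R$ condition satisfied at $t_k$, then combining $\|\tilde{d}_k\|^2\le -\langle\tilde{d}_k,\tilde{g}_k\rangle$ with $\langle\tilde{g}(t_k),\tilde{d}_k\rangle\ge -m_1\|\tilde{d}_k\|^2$ and a Lipschitz-type bound on $\langle\tilde{g}(t)-\tilde{g}_k,\tilde{d}_k\rangle$ gives $t_k\ge (1-m_1)/L$; and in the separate case where increasing $t$ first activates a constraint $l_k\notin J_k$, it gets $t_k\ge \tau/(\|a_{l_k}\|\,\|\tilde{d}_k\|)$. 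Your proposal contains neither the use of the $R$ condition at the accepted point nor the constraint-activation case, and its substitute argument (descent via $f_k'(x_k;\tilde{d}_k)$) fails for the nonsmooth setting, so the proof as written does not go through.
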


\begin{proof}
First, since $-\tilde{d}_k$ is the smallest norm in the convex hull of $\{\tilde{g}_i\}_{i=1}^{k}$, we have $\langle \tilde{d}_k, \tilde{g}_i \rangle \leq - ||\tilde{d}_k||^2$, we have 
\begin{equation} \label{d_k g_k}
   ||\tilde{d}_k||^2 \leq -\langle \tilde{d}_k, \tilde{g}_k \rangle 
\end{equation}
We will then divide the proof into two cases.

\noindent Case 1: if there exists $t_k$ such that $R$ conditions is satisfied, i.e., 

\begin{equation*} 
 \langle \tilde{g}(t_k),\tilde{d}_k \rangle \geq -m_1 ||\tilde{d}_k||^2.
\end{equation*}

\noindent According to \eqref{d_k g_k}, we can equivalently write 

\begin{equation*}
     \langle \tilde{g}(t_k),\tilde{d}_k \rangle - \langle \tilde{d}_k, \tilde{g}_k \rangle \geq \langle \tilde{g}(t_k),\tilde{d}_k \rangle + ||\tilde{d}_k||^2 \geq (1-m_1) ||\tilde{d}_k||^2.
\end{equation*}

\noindent Then, based on assumption A2, we have 

\begin{equation*}
    (1-m_1) ||\tilde{d}_k||^2  \leq \langle \tilde{g}(t_k),\tilde{d}_k \rangle - \langle \tilde{d}_k, \tilde{g}_k \rangle \leq t_k L ||\tilde{d}_k||^2.
\end{equation*}

\noindent Thus, we have $t_k \geq (1-m_1)/L$.

\noindent Case 2: if $t_k$ in case 1 makes some of the constraints in $l \notin J_k$ violated. Then, as $t_k$ gets larger from $0$, we can find $l_k$ such that 
\begin{equation*}
   a_{l_k}^T(x_k + t_k \tilde{d}_k)  = b_{l_k}. 
\end{equation*}

\noindent Then, 

\begin{equation*}
    t_k ||a_{l_k}|| \cdot ||\tilde{d}_k|| \geq t_k |a_{l_k}^T d_k| = |a_{l_k}^Tx_k-b_{l_k}| \geq \tau.
\end{equation*}

\noindent Thus, 
\begin{equation*}
    t_k \geq \frac{\tau}{||a_{l_k}|| \cdot ||\tilde{d}_k||}.
\end{equation*}
\end{proof}

\subsection{Convergence Rate and Optimality Condition}

The convergence rate analysis is similar to~\cite{zhang2024sampling,zhang2024stochastic}.  Interested readers can refer to the articles for the detailed proofs. For the sake of completeness, we will list the key theorems here.

\begin{theorem}
    Let $ T_{\varepsilon} = \inf \{ k \geq 0: ||\tilde{d}_k|| < \varepsilon\}$. Then $T_{\varepsilon}$ is a stopping time for the stochastic process $\hat{X}^k$ and 
    \begin{equation} \label{sp: convergence equation}
    \mathbb{E}[T_{\varepsilon}] \leq \frac{p}{2p-1}\Big(\frac{2 \zeta F_{max} }{\Theta \varepsilon^2}+2\Big).
\end{equation}
\end{theorem}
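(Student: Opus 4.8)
The plan is to place $T_\varepsilon$ inside the familiar framework of a biased random walk over ``successful'' and ``unsuccessful'' iterations, in the spirit of the stochastic trust-region analyses of \cite{zhang2024sampling,zhang2024stochastic}. First, that $T_\varepsilon$ is a stopping time is immediate: let $\mathcal F_k$ be the $\sigma$-algebra generated by all samples $S_0,\dots,S_k,T_1,\dots,T_k$ drawn through iteration $k$. Each $\tilde d_j$ with $j\le k$ is a measurable function of those samples together with the deterministic updates \eqref{d_k m}--\eqref{d_k}, so $\{T_\varepsilon=k\}=\{\|\tilde d_0\|\ge\varepsilon,\dots,\|\tilde d_{k-1}\|\ge\varepsilon,\ \|\tilde d_k\|<\varepsilon\}\in\mathcal F_k$, which also certifies that the accepted sequence $\hat X^k$ is adapted to $\{\mathcal F_k\}$.

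Next I would quantify the per-iteration progress. Call iteration $k$ \emph{successful} when the if-branch of Algorithm~\ref{SCS-2SP} is taken, so that $\hat x_k=x_k$ and $\delta_k$ is enlarged. By Theorem~\ref{sp: L^0}, with the prescribed sample size both $f_k$ and $\hat f_k$ lie within $\kappa\delta_k^2$ of $f$ uniformly on $\mathcal B(\hat x_k,\delta_k)$ with probability at least $1-\varepsilon$; since $S_k$ and $T_k$ are independent, a union bound shows that the acceptance test $f_k(x_k)-f_k(\hat x_{k-1})\le\eta_1(\hat f_k(x_k)-\hat f_k(\hat x_{k-1}))$ is forced to hold — hence the iteration is successful — with conditional probability at least some $p>\tfrac12$, where $p$ is pinned down by taking the accuracy parameter in Theorem~\ref{sp: L^0} small enough. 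On a successful iteration with $k<T_\varepsilon$, so $\|\tilde d_k\|\ge\varepsilon$, the direction $\tilde d_k$ is feasible (Theorem~\ref{d_k m is good}) and Theorem~\ref{d_k m descent} supplies a step $t_k\ge\underline t:=\min\{(1-m_1)/L,\ \tau/(\|a_{l_k}\|\,\|\tilde d_k\|)\}>0$ with $f_k(x_k+t_k\tilde d_k)-f_k(x_k)\le -m_2 t_k\|\tilde d_k\|^2$; since $\|\tilde d_k\|$ is bounded above (Lipschitzness of $f$ bounds the subgradients, hence $\tilde g_k$ and $\tilde d_k$), $\underline t$ is bounded below uniformly. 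Combining this sampled decrease with the uniform accuracy $\|f_k-f\|\le\kappa\delta_k^2$ on the ball and with $\delta_k\le\varepsilon/\xi$ from the update rule yields a genuine decrease of the true objective, $f(\hat x_k)\le f(\hat x_{k-1})-\Theta\varepsilon^2$, for a constant $\Theta>0$ obtained once $\xi$ is chosen so that the $O(\kappa\delta_k^2)$ error is dominated by $m_2\underline t\,\varepsilon^2$; on an unsuccessful iteration $f(\hat x_k)=f(\hat x_{k-1})$.

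Finally I would assemble these facts by optional stopping. Let $F_{max}\ge f(\hat x_0)-\inf_X f$, finite by A1. Because $f$ never increases along $\hat X^k$ and strictly decreases by at least $\Theta\varepsilon^2$ on every successful iteration before $T_\varepsilon$, at most $N_\varepsilon:=\lceil F_{max}/(\Theta\varepsilon^2)\rceil$ successful iterations occur before $T_\varepsilon$. Letting $U_k$ be the indicator that iteration $k$ is unsuccessful, the stopped partial sums of $U_k-(1-p)$ form a supermartingale, and the geometric contraction $\delta_k\to\delta_{k-1}/\gamma$ (which, through the test $\|d_k\|>\eta_2\delta_k$, can only help a subsequent success) shows no run of unsuccessful iterations can persist indefinitely. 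A Wald-type computation on the resulting biased random walk then gives $\mathbb{E}[T_\varepsilon]\le\frac{p}{2p-1}\,(N_\varepsilon+1)$, and substituting $N_\varepsilon\le \frac{2\zeta F_{max}}{\Theta\varepsilon^2}+1$ (the constant $\zeta$ absorbing the ceiling and the factor-of-two slack) produces \eqref{sp: convergence equation}.

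The main obstacle I anticipate is the second step: certifying that a ``good'' pair of independent samples genuinely forces the acceptance test, so the success probability is provably $\ge p>\tfrac12$, while simultaneously converting the sampled sufficient-decrease inequality of Theorem~\ref{d_k m descent} into a decrease of the \emph{true} $f$ with the correct $\varepsilon^2$ scaling. This requires threading the trust-region bound $\delta_k\le\varepsilon/\xi$ through both the sample-complexity estimate and the line-search lower bound on $t_k$, and it is precisely where the constants $\zeta$, $\Theta$, and $p$ are fixed; the remaining random-walk bookkeeping yielding the exact factor $\frac{p}{2p-1}$ is then routine given the framework of \cite{zhang2024sampling,zhang2024stochastic}.
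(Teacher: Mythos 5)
The paper itself does not spell out a proof of this theorem: it defers to the renewal--reward/supermartingale analysis of \cite{zhang2024sampling,zhang2024stochastic} (in the style of stochastic trust-region methods). Your stopping-time argument and your identification of the general framework are fine, but the core of your sketch deviates from that framework in a way that leaves a genuine gap. Your counting step asserts that ``$f$ never increases along $\hat X^k$ and strictly decreases by at least $\Theta\varepsilon^2$ on every successful iteration before $T_\varepsilon$,'' so that at most $N_\varepsilon=\lceil F_{max}/(\Theta\varepsilon^2)\rceil$ successes can occur. But the sufficient-decrease guarantee you invoke is conditional on the event that both $f_k$ and $\hat f_k$ are $\kappa\delta_k^2$-accurate, which by Theorem \ref{sp: L^0} holds only with probability $1-\varepsilon$ per iteration. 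On the complementary event the acceptance test in Algorithm \ref{SCS-2SP} can be passed with misleading estimates (a ``false'' successful step), and then $f(\hat x_k)$ may \emph{increase}; consequently the number of successful iterations before $T_\varepsilon$ is not almost surely bounded by $N_\varepsilon$, and the deterministic budget on which your Wald computation rests collapses. This is precisely why the cited analyses do not count successes against a monotone $f$: they introduce a Lyapunov function of the form $\Phi_k=\nu f(\hat x_k)+(1-\nu)\delta_k^2$, show that $\{\Phi_k\}$ is a supermartingale whose conditional expected decrease before $T_\varepsilon$ is at least $\Theta\varepsilon^2$ (the expected damage of false successes being absorbed by the $\delta_k^2$ term and the choice of $p$), and only then convert this into a hitting-time bound.

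A second, smaller issue is the constant. Even granting your budget of $N_\varepsilon$ successes with per-iteration success probability at least $p$, the natural Wald bound is of order $N_\varepsilon/p$, not $\frac{p}{2p-1}(N_\varepsilon+1)$; you assert the latter without derivation. The factor $\frac{p}{2p-1}$ in \eqref{sp: convergence equation} comes specifically from the renewal--reward step: renewals are the (random) times at which $\delta_k$ returns to its cap $\varepsilon/\xi$ under the biased random-walk dynamics $\delta_k\mapsto\gamma\delta_k$ or $\delta_k/\gamma$, and the expected inter-renewal length is bounded by $\frac{p}{2p-1}$ when $p>\tfrac12$; combining this with the supermartingale bound on the number of renewals via Wald's identity yields exactly the form of \eqref{sp: convergence equation}. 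To repair your proposal you would need to (i) replace the almost-sure monotonicity claim by the supermartingale argument for $\Phi_k$, including a bound on the conditional expected increase of $f$ on falsely accepted steps, and (ii) run the renewal--reward computation on $\delta_k$ rather than a success/failure tally, which is where $\zeta$, $\Theta$, and the factor $\frac{p}{2p-1}$ are actually produced.
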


\noindent Theorem \ref{sp: optimality} demonstrates that if $||\tilde{d}_k||<\varepsilon$, the $\varepsilon$-optimality condition for $f(x)$ will be satisfied. The error bound on this optimality is linked to both the sample size and the subgradient of $f_k(x)$. 

\begin{theorem} \label{sp: optimality}
    Let $\tilde{d}^* = - \argmin_{ \tilde{g} = Z Z^T g, g \in \partial f(\hat{x}_k)} ||\tilde{g}|| $. If $k$ is the smallest index for which $||\tilde{d}_k|| < \varepsilon$ and $|S_k| \geq \frac{-2(\varepsilon')^2}{L^2\log \delta}$, then 
    \begin{equation*}
        P(||\tilde{d}^*|| < 4 \varepsilon + \varepsilon') \geq 1-\delta.
    \end{equation*}
\end{theorem}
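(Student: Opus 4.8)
The plan is to connect the stopping condition $\|\tilde d_k\| < \varepsilon$ for the \emph{sampled} direction to an optimality statement about the \emph{true} direction $\tilde d^*$, using the uniform function-approximation guarantee from Theorem~\ref{sp: L^0} together with a stability argument for the minimum-norm element of a projected subdifferential. First I would recall that $\tilde d_k = -Nr(\tilde G_k)$ is built from the modified subgradients $\tilde g_i = ZZ^T g_i$ accumulated along the run, so $-\tilde d_k$ lies in (and is the minimum-norm element of) the convex hull of $\{\tilde g_i\}_{i=1}^k$, i.e. it is the minimum-norm element of an inner approximation to $ZZ^T \partial f_{k-1}(\hat x_k)$. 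Hence $\|\tilde d_k\| < \varepsilon$ means there is a point in $ZZ^T \partial f_{k-1}(\hat x_k)$ (actually in its convex hull of sampled pieces) with norm $< \varepsilon$, which is the approximate stationarity certificate for the \emph{sampled} problem $f_{k-1}$ on the affine feasible set.

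Next I would transfer this from $f_{k-1}$ (equivalently $f_k$, up to the sampling refinement) to $f$. The bridge is the uniform bound $|f_k(x) - f(x)| \le \kappa \delta_k^2$ on $\mathcal B(\hat x_k,\delta_k)$ from Theorem~\ref{sp: L^0}, which holds with probability $\ge 1-\delta$ once $|S_k|$ is large enough; here the condition $|S_k| \ge -2(\varepsilon')^2 / (L^2 \log\delta)$ should be read as the instantiation that makes the Hoeffding bound give confidence $1-\delta$ at the relevant tolerance, with $\varepsilon'$ playing the role of the additive approximation error in the subgradient/direction estimate. On that event, any subgradient of $f_k$ at a point near $\hat x_k$ is within $\varepsilon'$ (in the appropriate sense) of a subgradient of $f$: one standard way is to use that $f_k \le f + \kappa\delta_k^2$ and $f_k \ge f - \kappa\delta_k^2$ pointwise on the ball, so the $\varepsilon$-subdifferentials are nested, $\partial f_k(\hat x_k) \subseteq \partial_{2\kappa\delta_k^2} f(\hat x_k)$, and a finite-valued convex $f$ has $\partial_{\eta} f(x)$ within Hausdorff distance $O(\sqrt\eta)$ (or $O(\eta)$ under a quadratic growth / Lipschitz-gradient-of-Moreau-envelope type bound) of $\partial f(x)$; projecting by $ZZ^T$ is nonexpansive, so the same closeness survives projection. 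Choosing $\delta_k$ small enough (which the algorithm's $\delta_k \le \varepsilon/\xi$ rule enforces at the stopping index) makes this gap at most $\varepsilon'$, hence there is $g \in \partial f(\hat x_k)$ with $\|ZZ^T g\| \le \|\tilde d_k\| + \varepsilon' < \varepsilon + \varepsilon'$.

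Then I would account for the remaining slack between ``some projected subgradient of $f$ has small norm'' and ``the minimum-norm projected subgradient $\tilde d^*$ has small norm.'' Since $\tilde d^*$ is by definition the minimum-norm element of $\{ZZ^T g : g \in \partial f(\hat x_k)\}$, once we exhibit one element of that set with norm $< \varepsilon + \varepsilon'$ we immediately get $\|\tilde d^*\| \le \|ZZ^T g\| < \varepsilon + \varepsilon'$, which is already stronger than the claimed $4\varepsilon + \varepsilon'$. The factor $4$ is the cushion absorbing the discrepancies I glossed over: (a) $\tilde d_k$ is built from $f_{k-1}$ while the uniform bound is stated for $f_k$, costing one more $\kappa\delta_k^2 \le$ (a multiple of) $\varepsilon$; (b) the evaluation point $\hat x_k$ versus the trial points $x_k$ along the line search differ by at most $\delta_k$, and Lipschitz continuity of $f$ turns that into another $O(\varepsilon)$; (c) the inner-hull approximation $Nr(\tilde G_k)$ of the full minimum-norm element, which Wolfe's mechanism controls up to the same order. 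I would collect these three as $\le 3\varepsilon$ and add the genuine statistical error $\varepsilon'$.

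The main obstacle is step two: making rigorous the passage from the uniform \emph{function-value} bound $|f_k - f| \le \kappa\delta_k^2$ to a bound on the distance between $\partial f_k(\hat x_k)$ and $\partial f(\hat x_k)$, since subdifferentials are not continuous functionals of the function values in general — one needs the convexity plus the ball of radius $\delta_k$ to invoke an $\varepsilon$-subdifferential sandwich and then quantify how fast $\partial_\eta f \to \partial f$ as $\eta \to 0$. Under A1 (finite-valued convex, Lipschitz $f$) this can be done, but the constant linking $\eta$ to the Hausdorff gap is where assumptions are really being used, and it is what forces the specific sample-size condition $|S_k| \ge -2(\varepsilon')^2/(L^2\log\delta)$; I would state that lemma explicitly (or cite \cite{zhang2024sampling,zhang2024stochastic}) and then the rest of the argument is bookkeeping with the triangle inequality and the nonexpansiveness of the projection $ZZ^T$.
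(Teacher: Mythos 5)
There is a genuine gap, and it sits exactly where your proposal is most casual. Your opening step asserts that $-\tilde d_k$ is the minimum-norm element of the convex hull of $\{\tilde g_i\}_{i=1}^k$ and hence an element of (an inner approximation to) $ZZ^T\partial f_{k-1}(\hat x_k)$, so that $\|\tilde d_k\|<\varepsilon$ is already an approximate stationarity certificate for the sampled problem. That containment is false: the accumulated subgradients $\tilde g_i$ were computed at earlier iterates $x_i$ (and with respect to earlier sample-average functions), and for a nonsmooth convex function a subgradient at a nearby point is not a subgradient at $\hat x_k$. Concretely, $-\tilde d_k$ is a convex combination of $\tilde g_k$ and $-\tilde d_{k-1}$, and the combination can have small norm even when every element of $ZZ^T\partial f_k(\hat x_k)$ is large (the two vectors could nearly cancel). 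Ruling this out is precisely the content of the paper's Lemma \ref{sp: d_k^*}: a contradiction argument that uses the line-search condition $R$ in \eqref{sp: sdc} (which prevents $\tilde g_k$ and $\tilde d_{k-1}$ from being too opposed), the explicit formula for $\lambda_k^*$, the parameter range $\tfrac14\le m_2<m_1<\tfrac12$, and the hypothesis that $k$ is the \emph{smallest} index with $\|\tilde d_k\|<\varepsilon$ (so $\|\tilde d_{k-1}\|\ge\varepsilon$). That is where the factor $4$ comes from; it is not a cushion for bookkeeping errors, and your item (c), which defers it to ``Wolfe's mechanism,'' is the missing proof. Without it your chain breaks at the first link, and the conclusion $\|\tilde d^*\|<\varepsilon+\varepsilon'$ you claim as ``stronger'' is not established.

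The second discrepancy is the statistical transfer. You propose to go from the uniform function-value bound of Theorem \ref{sp: L^0} to a Hausdorff-type bound between $\partial f_k(\hat x_k)$ and $\partial f(\hat x_k)$ via $\varepsilon$-subdifferential sandwiching and an $O(\sqrt\eta)$ stability estimate; you yourself flag that this is the hard step, and indeed no such quantitative estimate is available from A1 alone, nor would it reproduce the stated sample size. The paper avoids this entirely: it applies Hoeffding's inequality directly to the directional derivatives $h'(\hat x_k;\bar d_k(\hat x_k),\omega_i)$ (Theorem \ref{sp: X and X_k}), uses $\partial h\subseteq\partial_\varepsilon h$ to compare with the $\varepsilon$-directional derivative, and then identifies $\|\tilde d^*\|=\tilde{\mathcal X}(\hat x_k)$ and $\|\tilde d_k^*\|=\tilde{\mathcal X}_k(\hat x_k)$ through the min--max/min-norm duality of Lemma \ref{sp: d and X}, giving $\|\tilde d^*\|-\|\tilde d_k^*\|\le\varepsilon'$ with probability $1-\delta$; this is where the condition $|S_k|\ge\frac{-2(\varepsilon')^2}{L^2\log\delta}$ actually originates, not from Theorem \ref{sp: L^0}. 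To repair your argument you would need to (i) prove Lemma \ref{sp: d_k^*} (or an equivalent statement tying $\|\tilde d_k\|$ to the min-norm element of $ZZ^T\partial f_k(\hat x_k)$), and (ii) replace the subdifferential-stability sketch with a concentration argument at the level of directional derivatives or min-norm subgradients.
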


\noindent To prove the theorem, we first define a few notations.

\begin{itemize}
    \item $\tilde{d}^* = - \argmin_{ \tilde{g} = Z Z^T g, g \in \partial f(\hat{x}_k)} ||\tilde{g}||$: negative of the smallest norm subgradient of $f(\hat{x}_k)$ projected onto the null-space $Z$.
    \item $\tilde{d}_k^* = - \argmin_{\tilde{g}_k = Z Z^T g, g \in \partial f_k(\hat{x}_k)} ||\tilde{g}_k|| $: negative of the smallest norm subgradient of $f_k(\hat{x}_k)$ projected onto the null-space $Z$.
    \item $d_k = -Nr(\{g_k,-d_{k-1} \})$, where $g_k \in \partial f_k(\hat{x}_k)$.
    \item Subdifferential of $f$ at $z'$: $\partial f(z') = \{q: f(z) \geq f(z') + q^T (z-z')\}$.
    \item $\varepsilon$-subdifferential of $f$ at $z'$: $\partial f_{\varepsilon}(z') = \{q: f(z) \geq f(z') + q^T (z-z')-\varepsilon\}$.
    \item Directional derivative of $f$ at $z$ in direction $d$: $f'(z;d)=\max\{d^Tq: q \in \partial f(z)\}$.
    \item $\varepsilon$-directional derivative of $f$ at $z$ in direction $d$: $f_{\varepsilon}'(z;d)=\max\{d^Tq: q \in \partial_{\varepsilon} f(z)\}$.
    \item $\tilde{\mathcal{X}}(z) := -\min_{\tilde{d} = Z Z^T d, ||\tilde{d}|| = 1} f'(z;\tilde{d})$ and $\tilde{\mathcal{X}}_{k}(z) := -\min_{\tilde{d} = Z Z^T d, ||\tilde{d}|| = 1} f_k'(z;\tilde{d})$.
    \item $\tilde{\mathcal{X}}_{\varepsilon}(z) := -\min_{\tilde{d} = Z Z^T d, ||\tilde{d}|| = 1} f_{\varepsilon}'(z;\tilde{d})$ and $\bar{d}_k(z) = - \argmin_{\tilde{d} = Z Z^T d, ||\tilde{d}|| = 1} f_{\varepsilon}'(z;\tilde{d}) $.
\end{itemize}

\begin{lemma} \label{sp: d_k^*}
    Let the tuple $(m_1, m_2)$ satisfy the requirement in \eqref{sp: sdc} ($\frac{1}{4} \leq m_1 < \frac{1}{2}$ and $\frac{1}{4} \leq m_2 < m_1$). If $k$ is the smallest index for which $||\tilde{d}_k|| < \varepsilon$, then we have $||\tilde{d}_k^*|| < 4 \varepsilon$.
\end{lemma}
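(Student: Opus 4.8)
The plan is to prove the lemma by a Wolfe‑type argument showing that, once the computed search direction is short, the current incumbent is nearly stationary for $f_k$ on the feasible subspace — everything carried out inside $\mathrm{range}(Z)$. Write $\Pi := ZZ^{T}$ (an orthogonal projector) and $C := \Pi\,\partial f_{k}(\hat{x}_{k})$, so that $-\tilde d_{k}^{*}=\mathrm{Nr}(C)$ and $C$ is convex and compact. From \eqref{sp: p^k}--\eqref{d_k}, $-\tilde d_{k}$ is the minimum‑norm element of $\mathrm{conv}\{-\tilde d_{k-1},\tilde g_{k}\}$, which yields at once the variational inequalities $\langle -\tilde d_{k},-\tilde d_{k-1}\rangle\ge\|\tilde d_{k}\|^{2}$ and $\langle -\tilde d_{k},\tilde g_{k}\rangle\ge\|\tilde d_{k}\|^{2}$, the monotonicity $\|\tilde d_{k}\|\le\min\{\|\tilde d_{k-1}\|,\|\tilde g_{k}\|\}$, and, when the optimal weight $\lambda_{k}^{*}\in(0,1)$, the exact value
\begin{equation*}
\|\tilde d_{k}\|^{2}=\frac{\|\tilde d_{k-1}\|^{2}\,\|\tilde g_{k}\|^{2}-\langle \tilde d_{k-1},\tilde g_{k}\rangle^{2}}{\|\tilde d_{k-1}+\tilde g_{k}\|^{2}}.
\end{equation*}
I will also use the line‑search condition \eqref{sp: sdc} inherited from iteration $k-1$, namely $0>\langle \tilde g_{k},\tilde d_{k-1}\rangle\ge -m_{1}\|\tilde d_{k-1}\|^{2}$; feasibility of $\tilde d_{k-1},\tilde g_{k}$ is Theorem \ref{d_k m is good} and the existence of an admissible step is Theorem \ref{d_k m descent}.

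First I would dispose of the two boundary cases of \eqref{sp: p^k}. If $\lambda_{k}^{*}=0$, then $-\tilde d_{k}=\tilde g_{k}$, which is a projected subgradient of $f_k$ at $\hat x_k$ (up to the $O(\kappa\delta_{k}^{2})$ gap between successive SAA models, controlled by Theorem \ref{sp: L^0}), so $\|\tilde d_{k}^{*}\|\le\|\tilde g_{k}\|=\|\tilde d_{k}\|<\varepsilon$. If $\lambda_{k}^{*}=1$, then $-\tilde d_{k}=-\tilde d_{k-1}$ and the boundary optimality condition gives $\langle \tilde d_{k-1},\tilde g_{k}\rangle\le -\|\tilde d_{k-1}\|^{2}$; combined with the line‑search inequality this forces $(1-m_{1})\|\tilde d_{k-1}\|^{2}\le 0$, hence $\tilde d_{k-1}=0$ since $m_{1}<1$, contradicting that $k$ is the \emph{first} index with $\|\tilde d_{k}\|<\varepsilon$. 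So we may assume $\lambda_{k}^{*}\in(0,1)$ and use the displayed identity.

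The core step is to show $-\tilde d_{k}\in\Pi\,\partial_{\eta}f_{k}(\hat x_{k})$ for a small $\eta$, where $\partial_{\eta}$ is the localized $\eta$‑subdifferential on $\mathcal B(\hat x_{k},\delta_{k})$, and then transfer back to $C$. Let $j\le k$ be the last iteration at which the incumbent was updated. If the incumbent has been $\hat x_{k}$ throughout $j,\dots,k$, then $-\tilde d_{k-1}$ is a convex combination of $-\tilde d_{j-1}$ and the projected subgradients $\tilde g_{j},\dots,\tilde g_{k-1}$, each of which — being a subgradient of a (coarser) SAA model at $\hat x_{k}$ — lies in $\Pi\,\partial_{\eta}f_{k}(\hat x_{k})$ with $\eta=O(\kappa\delta_{k}^{2})$ by Theorem \ref{sp: L^0}; hence so does $-\tilde d_{k}$, and $\|\mathrm{Nr}(\Pi\,\partial_{\eta}f_{k}(\hat x_{k}))\|\le\|\tilde d_{k}\|<\varepsilon$. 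If instead the incumbent was updated at (or just before) $k-1$, then $-\tilde d_{k-1}$ carries subgradient data at the previous incumbent; there one uses the descent test that triggered the update, the displacement bound $\|\hat x_{k}-\hat x_{k-1}\|\le\delta_{k-1}$, and convexity to place $-\tilde d_{k-1}$, hence $-\tilde d_{k}$, in $\Pi\,\partial_{\eta'}f_{k}(\hat x_{k})$ with $\eta'=O(L\delta_{k-1})$. In either case the elementary support‑function estimate
\begin{equation*}
\big\|\mathrm{Nr}\big(\Pi\,\partial f_{k}(\hat x_{k})\big)\big\|\ \le\ \big\|\mathrm{Nr}\big(\Pi\,\partial_{\eta}f_{k}(\hat x_{k})\big)\big\|\ +\ \tfrac{\eta}{\delta_{k}}\ +\ O(\delta_{k}),
\end{equation*}
(the $O(\delta_{k})$ term handling the curvature of the smooth part of $f$ in the difference‑quotient versus directional‑derivative comparison, legitimate for the structured recourse of §\ref{SQP}), together with the cap $\delta_{k}\le\varepsilon/\xi$, yields $\|\tilde d_{k}^{*}\|<4\varepsilon$; the constant $4$ absorbs the Wolfe slack — this is where $m_{1}<\tfrac12$ is used — together with the $\delta_{k}$‑order approximation and displacement errors.

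The hard part will be the incumbent‑just‑moved case (and, within the other case, the single stale tail element $-\tilde d_{j-1}$): certifying that an aggregate assembled at an earlier incumbent and from a coarser sample still represents $\partial f_{k}(\hat x_{k})$ up to an $O(\varepsilon)$ error. This forces the line‑search condition \eqref{sp: sdc}, the radius cap $\delta_{k}\le\varepsilon/\xi$, and the sample‑complexity bound of Theorem \ref{sp: L^0} to be used in tandem, and it is the only place where the geometry is not completely routine; the boundary‑case analysis, the two‑point minimum‑norm identity, and the $\eta$‑subdifferential bookkeeping are all standard once this is in place.
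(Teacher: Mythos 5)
Your proposal does not close the lemma, and the route it takes is not the one that works here. The statement is a purely deterministic, geometric fact about the two-point minimum-norm problem \eqref{sp: p^k}: since $\tilde g_k$ is itself one element of $ZZ^T\partial f_k(\hat x_k)$, you automatically have $\|\tilde d_k^*\|\le\|\tilde g_k\|$, so the whole lemma reduces to showing $\|\tilde g_k\|<4\varepsilon$. The paper does exactly this by contradiction: if $\|\tilde g_k\|\ge\|\tilde d_k^*\|\ge 4\varepsilon$, then combining (i) $\|\tilde d_{k-1}\|\ge\varepsilon$ (minimality of $k$), (ii) the sign and magnitude of $\langle\tilde g_k,\tilde d_{k-1}\rangle$ guaranteed by the $R$ condition in \eqref{sp: sdc}, and (iii) the explicit optimality of $\lambda_k^*$ in \eqref{sp: p^k}, one gets $\|\tilde d_k\|^2\ge(\lambda_k^*)^2\|\tilde g_k\|^2+(1-\lambda_k^*)^2\|\tilde d_{k-1}\|^2\ge\varepsilon^2$ in both orderings of $\|\tilde g_k\|$ versus $\|\tilde d_{k-1}\|$ (a lower bound $\lambda_k^*\ge 2/17$ is extracted from the closed form of $\lambda_k^*$ and $m_2<m_1<\tfrac12$), contradicting $\|\tilde d_k\|<\varepsilon$. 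You state the needed observation $\|\tilde d_k^*\|\le\|\tilde g_k\|$ only in the $\lambda_k^*=0$ boundary case and then abandon it; no part of your plan ever bounds $\|\tilde g_k\|$ in the interior case, which is the actual content of the lemma.

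Instead, your ``core step'' tries to certify that the aggregate $-\tilde d_k$ (and hence the stale component $-\tilde d_{k-1}$, possibly generated at an earlier incumbent from a coarser sample) lies in an $\eta$-subdifferential of $f_k$ at $\hat x_k$, and then to convert that into a bound on $\|\tilde d_k^*\|$ via an unproved support-function estimate with $\eta/\delta_k+O(\delta_k)$ slack. You yourself flag this containment as ``the hard part'' and leave it open, so the argument is incomplete at its decisive point. It is also mismatched with the statement being proved: the lemma's conclusion $\|\tilde d_k^*\|<4\varepsilon$ carries no probability, no $\delta_k$, and no sample-size condition — the sampling error is handled separately in Theorem \ref{sp: X and X_k} — so the $O(\kappa\delta_k^2)$ terms from Theorem \ref{sp: L^0} and the displacement errors you invoke cannot simply be ``absorbed'' into the constant $4$; the factor $4$ comes from the deterministic norm computation above, not from error bookkeeping. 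Note also that the constraint $m_1<\tfrac12$ enters through the lower bound on $\lambda_k^*$ (via the cross term $\langle\tilde g_k,\tilde d_{k-1}\rangle\ge -m_2\|\tilde d_{k-1}\|^2$), not through any ``Wolfe slack'' absorbed by the constant. The boundary-case analysis you give ($\lambda_k^*\in\{0,1\}$) is fine but peripheral; the main case is where your proposal has a genuine gap.
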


\begin{proof}
    Suppose the claim is false, then by the definition of $\tilde{d}_k^*$, we have $||\tilde{g}_k|| \geq ||\tilde{d}_k^*|| \geq 4\varepsilon$. Thus,
    \begin{equation} \label{sp: norm of d_k}
        \begin{aligned}
            ||\tilde{d}_k||^2 & = ||\lambda_k^* Z Z^T g_k-(1-\lambda_k^*)Z Z^T d_{k-1}||^2\\
            & = (\lambda_k^*)^2||\tilde{g}_k||^2+(1-\lambda_k^*)^2||\tilde{d}_{k-1}||^2-2\lambda_k^*(1-\lambda_k^*)\langle \tilde{g}_k,\tilde{d}_{k-1} \rangle \\
            & \geq (\lambda_k^*)^2||\tilde{g}_k||^2+(1-\lambda_k^*)^2||\tilde{d}_{k-1}||^2 \\
        \end{aligned}
    \end{equation}

\noindent where the inequality holds because line-search algorithm ensures the condition R in \eqref{sp: sdc}. 

\noindent We will now divide the analysis into 2 cases which are examined below: a) $||\tilde{g}_k|| > ||\tilde{d}_{k-1}|| \geq \varepsilon$ and $||\tilde{g}_k|| \geq 4 \varepsilon$  and b) $||\tilde{d}_{k-1}|| \geq ||\tilde{g}_k|| \geq 4 \varepsilon$. 

(a) In this case from \eqref{sp: norm of d_k} we have 
\begin{equation} \label{sp: norm of d_k case 1}
    ||\tilde{d}_k||^2 \geq \big(17 (\lambda_k^*)^2 -2 \lambda_k^* + 1 \big) \varepsilon^2
\end{equation}

\noindent Note that
\begin{equation*}
    \lambda_k^*=\frac{\langle \tilde{g}_k,\tilde{d}_{k-1} \rangle + ||\tilde{g}_k||^2}{||\tilde{d}_{k-1}||^2+||\tilde{g}_k||^2+2\langle \tilde{g}_k,\tilde{d}_{k-1} \rangle} \geq \frac{-m_2||\tilde{d}_{k-1}||^2+||\tilde{g}_k||^2}{||\tilde{d}_{k-1}||^2+||\tilde{g}_k||^2},
\end{equation*}

\noindent where the inequality holds because the R condition in \eqref{sp: sdc} ensures $0 > \langle \tilde{g}_k,\tilde{d}_{k-1} \rangle \geq - m_2 ||\tilde{d}_{k-1}||^2$. Thus, we claim that $\lambda_k^* \geq \frac{2}{17}$ because it is suffice to show

\begin{equation*}
    - 17 \cdot m_2||\tilde{d}_{k-1}||^2+17 \cdot ||\tilde{g}_k||^2 \geq 2 ||\tilde{d}_{k-1}||^2 + 2 ||\tilde{g}_k||^2.
\end{equation*}

\noindent This can be verified by observing $m_2 < m_1 < \frac{1}{2}$ and $||\tilde{g}_k|| > ||\tilde{d}_{k-1}||$. Thus, based on \eqref{sp: norm of d_k case 1} and $\lambda_k^* \geq \frac{2}{17}$, we have
$||\tilde{d}_k|| \geq \varepsilon $. This contradicts the assumptions of the lemma. 

\noindent (b) If $||\tilde{d}_{k-1}|| \geq ||\tilde{g}_k|| \geq 4 \varepsilon$, then we have 
\begin{equation*}
    ||\tilde{d}_k||^2 \geq \big(32 (\lambda_k^*)^2 -32 \lambda_k^* + 16 \big) \varepsilon^2 \geq 8 \varepsilon^2. 
\end{equation*}

\noindent Thus, $||\tilde{d}_k|| \geq 2 \sqrt{2} \varepsilon $, which also contradicts the assumptions of the lemma.
\end{proof}

\begin{lemma} \label{sp: d and X}
    $||\tilde{d}^*|| = \tilde{\mathcal{X}}(\hat{x}_k)$ and $||\tilde{d}_k^*|| = \tilde{\mathcal{X}}_k(\hat{x}_k)$.
\end{lemma}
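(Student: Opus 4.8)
The plan is to prove the first identity $||\tilde{d}^*|| = \tilde{\mathcal{X}}(\hat{x}_k)$; the second, $||\tilde{d}_k^*|| = \tilde{\mathcal{X}}_k(\hat{x}_k)$, then follows verbatim with $f_k$ and $\partial f_k$ in place of $f$ and $\partial f$, since (by A1 and A2) $f_k$ shares the relevant structural properties of $f$: it is finite-valued, convex, Lipschitz, and has nonempty subdifferentials. Let $R$ denote the null space of $A$, so that, taking $Z$ with orthonormal columns, $ZZ^T$ is the orthogonal projector onto $R$ and is therefore symmetric and idempotent. The first step is to rewrite both sides in terms of the set $G := \{ZZ^T q : q \in \partial f(\hat{x}_k)\}$, which by A1 is a nonempty, convex, compact subset of $R$. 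Using symmetry and idempotency of $ZZ^T$, for every $\tilde{d} = ZZ^T d \in R$ one has $\langle \tilde{d}, q\rangle = \langle \tilde{d}, ZZ^T q\rangle$, and hence $f'(\hat{x}_k;\tilde{d}) = \max_{q\in\partial f(\hat{x}_k)}\langle\tilde{d},q\rangle = \max_{v\in G}\langle\tilde{d},v\rangle =: \sigma_G(\tilde{d})$. Consequently $||\tilde{d}^*|| = \min_{v\in G}||v|| = \mathrm{dist}(0,G)$ and $\tilde{\mathcal{X}}(\hat{x}_k) = \max_{\tilde{d}\in R,\, ||\tilde{d}||=1} \big(-\sigma_G(\tilde{d})\big)$, so the lemma reduces to the classical duality between the minimal-norm element of a convex set and its support function.

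To establish this duality, let $v^*$ be the (unique) minimal-norm element of $G$, so that $||\tilde{d}^*|| = ||v^*||$, and suppose first $v^* \neq 0$; set $u := v^*/||v^*||$, a unit vector in $R$. The variational inequality characterizing $v^*$ as the projection of the origin onto $G$ gives $\langle v^*, v\rangle \geq ||v^*||^2$ for all $v\in G$; dividing by $||v^*||$ yields $\langle u, v\rangle \geq ||v^*||$ for all $v\in G$, so $-\sigma_G(-u) = \min_{v\in G}\langle u,v\rangle \geq ||v^*||$, whence $\tilde{\mathcal{X}}(\hat{x}_k) \geq -\sigma_G(-u) \geq ||v^*||$. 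For the reverse inequality, Cauchy--Schwarz gives, for any unit $\tilde{d}\in R$, $-\sigma_G(\tilde{d}) = \min_{v\in G}\langle -\tilde{d},v\rangle \leq \langle -\tilde{d}, v^*\rangle \leq ||v^*||$, hence $\tilde{\mathcal{X}}(\hat{x}_k)\leq ||v^*||$. Combining the two bounds gives $\tilde{\mathcal{X}}(\hat{x}_k) = ||v^*|| = ||\tilde{d}^*||$.

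I expect the main difficulty to be bookkeeping rather than any hard estimate: one must track the sign conventions and, in particular, keep in mind that $\tilde{\mathcal{X}}$ is a maximum over the unit \emph{sphere} in $R$ and not over the unit ball, which is precisely why the supporting-hyperplane (projection) step is needed to realize $\mathrm{dist}(0,G)$ as that maximum. The one genuinely delicate case is $v^* = 0$, i.e.\ $0\in G$ (the projected subdifferential contains the origin): then $||\tilde{d}^*|| = 0$, and $0\in G$ forces $\sigma_G(\tilde{d})\geq 0$ for every $\tilde{d}$, so $\tilde{\mathcal{X}}(\hat{x}_k)\leq 0$; the matching bound $\tilde{\mathcal{X}}(\hat{x}_k)\geq 0$ comes from a supporting hyperplane of $G$ at $0$ relative to $R$, available in the regime of interest (where $\hat{x}_k$ is not already an exact strict-kink minimizer of $f$ on $\hat{x}_k + R$). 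Applying the same argument verbatim to $f_k$ then yields $||\tilde{d}_k^*|| = \tilde{\mathcal{X}}_k(\hat{x}_k)$, completing the proof.
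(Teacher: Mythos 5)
Your proof is correct and follows essentially the same route as the paper: the identity is the classical duality between the minimum-norm element of the projected subdifferential $G = ZZ^T\partial f(\hat{x}_k)$ and the negative of the minimal directional derivative over the unit sphere of the null space, which the paper asserts by exhibiting the min--max stationary point $(\bar{q}, -\bar{q}/\|\bar{q}\|)$ and which you verify explicitly via the projection variational inequality together with Cauchy--Schwarz. Your write-up is in fact slightly more careful than the paper's: the paper divides by $\|\bar{q}\|$, silently assuming the minimum-norm projected subgradient is nonzero, whereas you isolate the case $v^* = 0$ and correctly observe that equality there requires $0$ to lie on the relative boundary of $G$ (for a strict-kink minimizer such as $f(x) = |x|$ at $x = 0$ with $ZZ^T = I$, one has $\tilde{\mathcal{X}}(\hat{x}_k) = -1 < 0 = \|\tilde{d}^*\|$, so the identity genuinely needs that caveat); since the lemma is only used to bound $\|\tilde{d}^*\|$ from above in Theorem \ref{sp: optimality}, this degenerate case is harmless for the downstream result, but flagging it is a genuine improvement over the paper's proof.
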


\begin{proof}
    According to the definition of $\tilde{\mathcal{X}}(\hat{x}_k)$,
    \begin{equation} \label{sp: min-max}
        \mathcal{X}(\hat{x}_k)=-\min_{\tilde{d} = Z Z^T d, ||\tilde{d}|| = 1} f'(\hat{x}_k;\tilde{d}) = -\min_{\tilde{d} = Z Z^T d, ||\tilde{d}|| = 1} \max_{q \in \partial f(\hat{x}_k)}d^Tq
    \end{equation}

\noindent Note that the min-max  problem \eqref{sp: min-max} has a stationary point

\begin{equation*}
    \bar{q}=\argmin_{\tilde{q} = Z Z^Tq, q \in \partial f(\hat{x}_k)} ||\tilde{q}||, \quad  \bar{d} = - \frac{\bar{q}}{||\bar{q}||}.
\end{equation*}

\noindent Thus, $\tilde{\mathcal{X}}(\hat{x}_k) = ||\bar{q}|| = ||\tilde{d}^*||$. Similarly, we can also prove that $ \tilde{\mathcal{X}}_k(\hat{x}_k) = ||\tilde{d}_k^*||$.
\end{proof}

\begin{theorem} \label{sp: X and X_k}
    If $|S_k| \geq \frac{-2(\varepsilon')^2}{L^2\log \delta}$, then $P(\tilde{\mathcal{X}}_{\varepsilon}(\hat{x}_k)-\tilde{\mathcal{X}}_k(\hat{x}_k) \leq \varepsilon') \geq 1-\delta$.
\end{theorem}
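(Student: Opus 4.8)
We sketch the argument. The plan is to convert the gap $\tilde{\mathcal{X}}_{\varepsilon}(\hat x_k)-\tilde{\mathcal{X}}_k(\hat x_k)$ into a one–dimensional comparison of $f$ and $f_k$, and then to control that comparison by a concentration bound at finitely many fixed points. Let $w$ be the unit direction in the range of $ZZ^T$ that attains the minimum in the definition of $\tilde{\mathcal{X}}_{\varepsilon}(\hat x_k)$ (so $w=-\bar d_k(\hat x_k)$ and $\tilde{\mathcal{X}}_{\varepsilon}(\hat x_k)=-f_{\varepsilon}'(\hat x_k;w)$). Since $w$ is also feasible in the minimization that defines $\tilde{\mathcal{X}}_k(\hat x_k)$, we have $\tilde{\mathcal{X}}_k(\hat x_k)\ge -f_k'(\hat x_k;w)$, hence
\[
\tilde{\mathcal{X}}_{\varepsilon}(\hat x_k)-\tilde{\mathcal{X}}_k(\hat x_k)\ \le\ f_k'(\hat x_k;w)-f_{\varepsilon}'(\hat x_k;w),
\]
and it suffices to bound the right-hand side by $\varepsilon'$ with probability at least $1-\delta$.

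Next I would use two standard facts about convex functions. First, the $\varepsilon$-directional derivative has the representation $f_{\varepsilon}'(z;w)=\inf_{t>0}\frac{f(z+tw)-f(z)+\varepsilon}{t}$. Second, $f_k'(z;w)=\lim_{t\downarrow0}\frac{f_k(z+tw)-f_k(z)}{t}\le\frac{f_k(z+tw)-f_k(z)}{t}$ for every $t>0$, by monotonicity of convex difference quotients. By the $\varepsilon$-analogue of Lemma \ref{sp: d and X}, $\tilde{\mathcal{X}}_{\varepsilon}(\hat x_k)=\min_{q\in\partial_{\varepsilon}f(\hat x_k)}\|ZZ^Tq\|\ge0$, so $f_{\varepsilon}'(\hat x_k;w)\le0$; since $f$ is finite-valued with nonnegative recession slope in every direction, the infimum above is attained at a finite $t^{\star}>0$, and then $f(\hat x_k+t^{\star}w)-f(\hat x_k)\le-\varepsilon$ together with the $L$-Lipschitz property of $f$ forces $t^{\star}\ge\varepsilon/L$. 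Taking $t=t^{\star}$ in the second fact and subtracting the first,
\[
f_k'(\hat x_k;w)-f_{\varepsilon}'(\hat x_k;w)\ \le\ \frac{1}{t^{\star}}\Big(\big[f_k-f\big](\hat x_k+t^{\star}w)-\big[f_k-f\big](\hat x_k)-\varepsilon\Big).
\]

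It remains to bound $|f_k-f|$ at the two points $\hat x_k$ and $\hat x_k+t^{\star}w$. I would condition on the $\sigma$-algebra $\mathcal{F}_{k-1}$ generated by everything computed before the fresh batch $S_k$ is drawn; relative to $\mathcal{F}_{k-1}$ the candidates $x_k$ and $\hat x_{k-1}$ (hence also the associated $w$ and $t^{\star}$) are deterministic, and $\hat x_k$ equals one of them. Applying Hoeffding's inequality to the sample average in $f_k$, whose summands lie in an interval of length $M-m$ by A2, together with a union bound over the (finitely many) deterministic candidate points, gives $\big|[f_k-f](\cdot)\big|\le\eta$ at the relevant points with probability at least $1-\delta$, where $\eta$ is driven to zero by the stated lower bound on $|S_k|$. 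Choosing the threshold so that $\eta\le\varepsilon/2$ already makes the last display $\le0\le\varepsilon'$, while a threshold tied to $\varepsilon\varepsilon'/L$ gives the bound through $t^{\star}\ge\varepsilon/L$; either way the conclusion follows.

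I expect the main obstacle to be the independence bookkeeping in the last step: $\hat x_k$ is selected by the acceptance test, which uses $f_k$ and $\hat f_k$, so $\hat x_k$ is not a priori independent of the samples defining $f_k$; moreover $f_k$ also aggregates the old samples in $S_{k-1}$, which are $\mathcal{F}_{k-1}$-measurable rather than fresh. The clean way around this is exactly the conditioning-on-$\mathcal{F}_{k-1}$ argument above combined with splitting on the two deterministic candidates $\hat x_k\in\{x_k,\hat x_{k-1}\}$, so that Hoeffding is only ever applied to an average of i.i.d.\ variables at points that are fixed given $\mathcal{F}_{k-1}$ — and here the algorithm's use of the independent batch $T_k$ in the test is what keeps the selection honest; the contribution of $S_{k-1}$ inside $f_k$ is absorbed by inducting on the previous iterations' good events. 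A minor technical point — attainment of the infimum defining $f_{\varepsilon}'(\hat x_k;w)$, which produced $t^{\star}$ — is immediate since the difference quotient tends to $+\infty$ as $t\downarrow0$ and is asymptotically nonnegative as $t\to\infty$; if one prefers, carry an arbitrarily small slack $\rho\downarrow0$ in place of the minimizer.
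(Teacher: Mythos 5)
Your opening step coincides with the paper's: pick the unit direction attaining $\tilde{\mathcal{X}}_{\varepsilon}(\hat{x}_k)$, use its feasibility in the minimization defining $\tilde{\mathcal{X}}_k(\hat{x}_k)$, and reduce everything to bounding $f_k'(\hat{x}_k;w)-f_{\varepsilon}'(\hat{x}_k;w)$. From there, however, the paper proceeds differently: it expresses the two quantities through the per-scenario directional derivatives, discards one part using the pointwise inequality $h'(\hat{x}_k;\cdot,\omega_i)\leq h_{\varepsilon}'(\hat{x}_k;\cdot,\omega_i)$ (since $\partial h(\hat{x}_k,\omega_i)\subseteq \partial_{\varepsilon}h(\hat{x}_k,\omega_i)$), and then applies Hoeffding directly to the i.i.d.\ variables $h_{\varepsilon}'(\hat{x}_k;\bar{d}_k(\hat{x}_k),\omega_i)$, whose range is controlled by the Lipschitz constant $L$. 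That is why the stated sample-size threshold involves only $\varepsilon'$, $L$ and $\delta$ (the displayed formula is evidently an inverted typo for a bound of order $-2L^{2}\log\delta/(\varepsilon')^{2}$, but in any case it is independent of $\varepsilon$).

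This is where your route has a genuine gap relative to the statement being proved. Replacing $f_{\varepsilon}'$ by the difference quotient at scale $t^{\star}\geq\varepsilon/L$ and applying Hoeffding to function values (bounded via A2) forces you to require $\eta\lesssim\min\{\varepsilon/2,\ \varepsilon\varepsilon'/L\}$, i.e.\ a sample size of order $(M-m)^{2}\log(1/\delta)/\varepsilon^{2}$ (or worse), which depends on $\varepsilon$ and blows up as $\varepsilon\to 0$. The theorem's threshold does not depend on $\varepsilon$, and this is not cosmetic: the proof of Theorem \ref{sp: optimality} invokes the present theorem and then takes $\lim_{\varepsilon\to 0}\tilde{\mathcal{X}}_{\varepsilon}(\hat{x}_k)$ while keeping the sample-size condition fixed, which your version cannot support — as $\varepsilon\to 0$ your difference-quotient scale collapses and pointwise function-value concentration can no longer control a directional derivative. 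So you have proved a weaker statement, not the one claimed. The other caveats you flag are not the distinguishing issues: the dependence of $\hat{x}_k$ on $S_{k-1}$ and on the acceptance test is equally unaddressed in the paper's own proof, and your appeals to $f_{\varepsilon}'(\hat{x}_k;w)\leq 0$ and to attainment of $t^{\star}$ sit at the same level of rigor as the paper's Lemma \ref{sp: d and X}. To match the theorem (and its downstream use), you should concentrate the $\varepsilon$-directional derivatives themselves, as the paper does, rather than the function values.
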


\begin{proof}
By definition, $\bar{d}_k(\hat{x}_k) = - \argmin_{||\tilde{d} = Z Z^T d, \tilde{d}|| = 1} f_{\varepsilon}'(\hat{x}_k;\tilde{d}) $. Hence,
    \begin{equation*}
        \begin{aligned}
        \tilde{\mathcal{X}}_{\varepsilon}(\hat{x}_k)-\tilde{\mathcal{X}}_{k}(\hat{x}_k)
        & \leq \frac{1}{|S_k|}\sum_{i=1}^{|S_k|} h'(\hat{x}_k;\bar{d}_k(\hat{x}_k),\omega_i)-E[h_{\varepsilon}'(\hat{x}_k;\bar{d}_k(\hat{x}_k),\omega)]\\
        & = \frac{1}{|S_k|}\sum_{i=1}^{|S_k|} [h'(\hat{x}_k;\bar{d}_k(\hat{x}_k),\omega_i)-h_{\varepsilon}'(\hat{x}_k;\bar{d}_k(\hat{x}_k),\omega_i)]\\
        & + \frac{1}{|S_k|}\sum_{i=1}^{|S_k|} h_{\varepsilon}'(\hat{x}_k;\bar{d}_k(\hat{x}_k),\omega_i) - E[h_{\varepsilon}'(\hat{x}_k;\bar{d}_k(\hat{x}_k),\omega)]
        \end{aligned}
    \end{equation*}
\noindent Note that for each $\omega_i$, we have $h'(\hat{x}_k;\bar{d}_k(\hat{x}_k),\omega_i) \leq h_{\varepsilon}'(\hat{x}_k;\bar{d}_k(\hat{x}_k),\omega_i)$, since $\partial h(\hat{x}_k,\omega_i) \subseteq \partial h_\varepsilon(\hat{x}_k,\omega_i) $. Also, by Hoeffding's inequality, if $|S_k| \geq \frac{-2(\varepsilon')^2}{L^2\log \delta}$, then 

\begin{equation*}
    P\big(\frac{1}{|S_k|}\sum_{i=1}^{|S_k|} h_{\varepsilon}'(\hat{x}_k;\bar{d}_k(\hat{x}_k),\omega_i) - E[h_{\varepsilon}'(\hat{x}_k;\bar{d}_k(\hat{x}_k),\omega)] \leq \varepsilon'\big) \geq 1-\delta,
\end{equation*}

\noindent which concludes the proof.
\end{proof}

\begin{proof} (Theorem \ref{sp: optimality})
    First, by Theorem \ref{sp: X and X_k} and Lemma \ref{sp: d and X}, if $|S_k| \geq \frac{-2(\varepsilon')^2}{L^2\log \delta}$, then with probability at least $1-\delta$,
    \begin{equation*}
        ||\tilde{d}^*||-||\tilde{d}_k^*||=\lim_{\varepsilon \to 0} \tilde{\mathcal{X}}_{\varepsilon} (\hat{x}_k) -\tilde{\mathcal{X}}_k(\hat{x}_k) \leq \varepsilon'.
    \end{equation*}
    \noindent Combining with Lemma \ref{sp: d_k^*}, we have
    \begin{equation*}
        P \big(||\tilde{d}^*|| < 4 \varepsilon + \varepsilon' \big) \geq 1 - \delta.
    \end{equation*}
\end{proof}

\section{Preliminary Computational Results} \label{Computational result: scs for sp}

Our computational experiments are based on data sets available at the USC 3D Lab~\footnote{https://github.com/USC3DLAB/SD}. Specifically, these problems are two-stage stochastic linear programming problems, and we have tested the LandS3, lgsc, pgp2, and ssn datasets. The study considers three different methods: the SGD~\cite{RM1951} algorithm with projection, stochastic mirror descent~\cite{nemirovski2009robust}, and the SCS algorithm. We track the decrease in objective values with respect to the number of iterations for all algorithms. The figures also include the 95\% upper and lower bounds of the objective values obtained from SD~\cite{HS1991,HS1994} as a benchmark. The algorithms were implemented on a MacBook Pro 2023 with a 2.6GHz 12-core Apple M3 Pro processor and 32GB of 2400MHz DDR4 onboard memory. The code used in this research is available at the following GitHub repositories: \href{https://github.com/yhz0/twosd-cpp}{SCS for SP} (accessed on August 24, 2024), and the computation results are shown in Figure \ref{obj scs for sp first 50} and Figure \ref{obj scs for sp last 50}.
.

\begin{figure}[ht]
\caption{$\{f(\hat{x}_k)\}_{k=1}^{k=50}$ for different combinations (data,algorithm).}
\centering
\begin{minipage}[t]{0.45\textwidth}
\centering
\includegraphics[width=7cm]{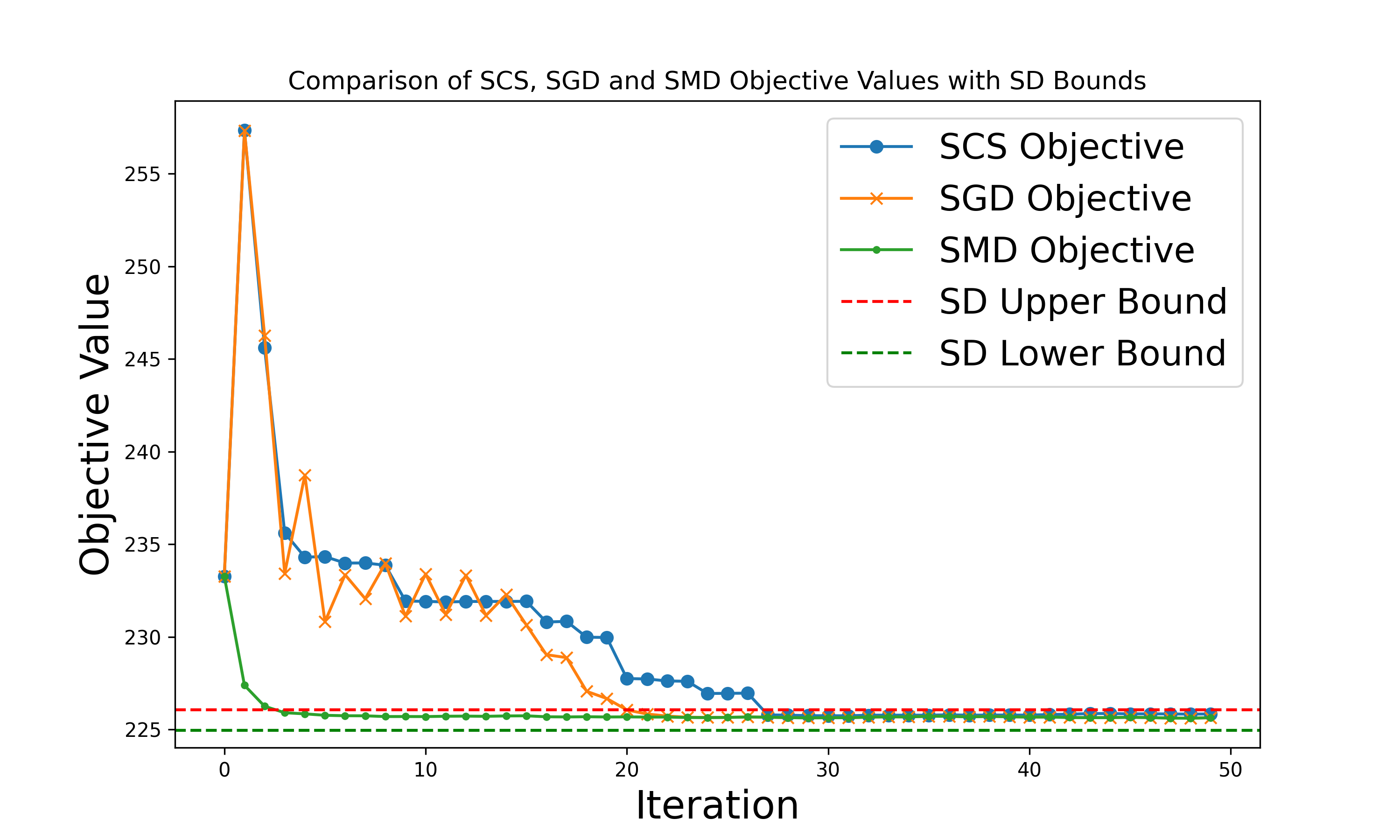}
\caption*{LandS3}
\end{minipage}
\begin{minipage}[t]{0.45\textwidth}
\centering
\includegraphics[width=7cm]{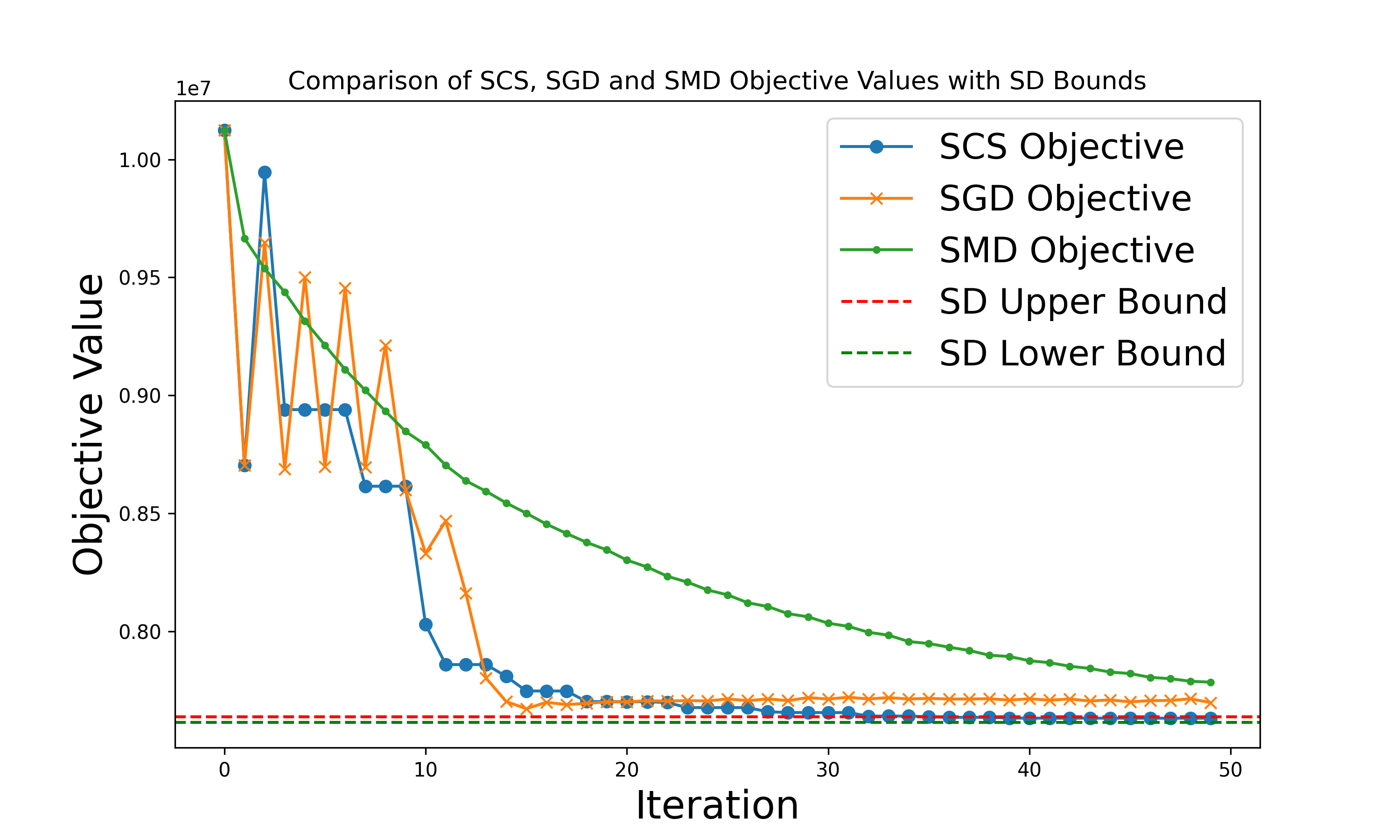}
\caption*{lgsc}
\end{minipage} \\

\begin{minipage}[t]{0.45\textwidth}
\centering
\includegraphics[width=7cm]{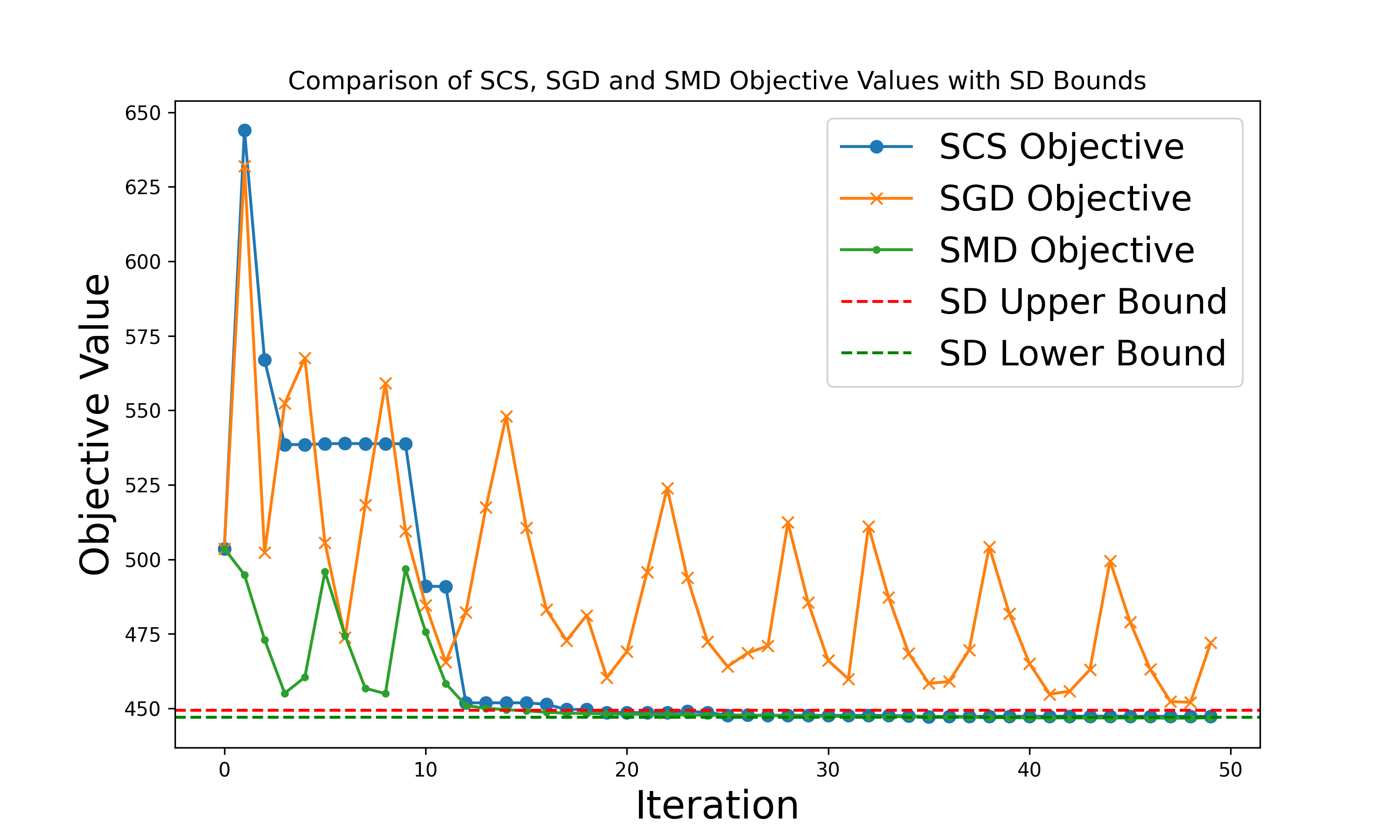}
\caption*{pgp2}
\end{minipage}
\begin{minipage}[t]{0.45\textwidth}
\centering
\includegraphics[width=7cm]{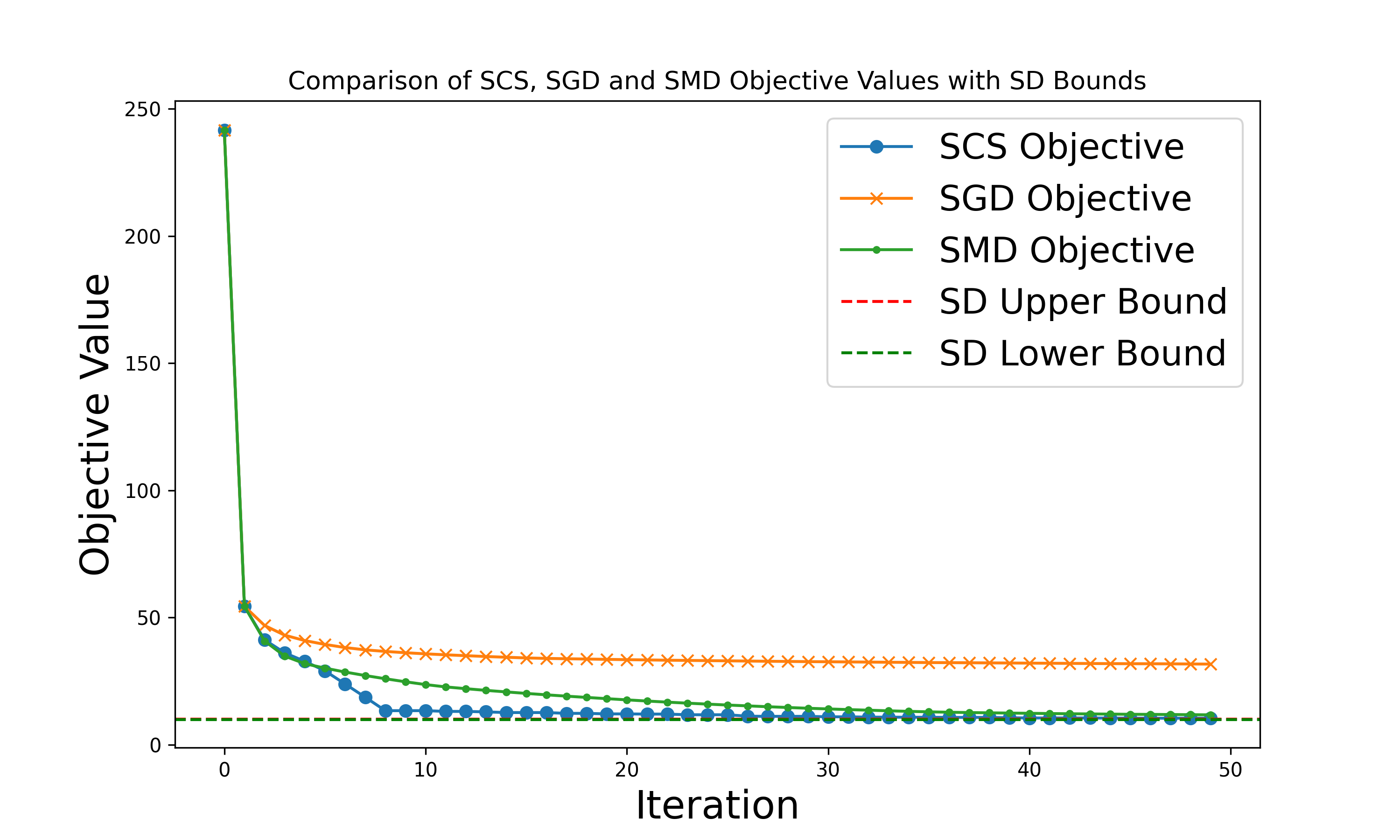}
\caption*{ssn}
\end{minipage}
\label{obj scs for sp first 50}
\end{figure}

\begin{figure}[ht]
\caption{$\{f(\hat{x}_k)\}_{k=-50}^{k=-1}$ for different combinations (data,algorithm).}
\centering
\begin{minipage}[t]{0.45\textwidth}
\centering
\includegraphics[width=7cm]{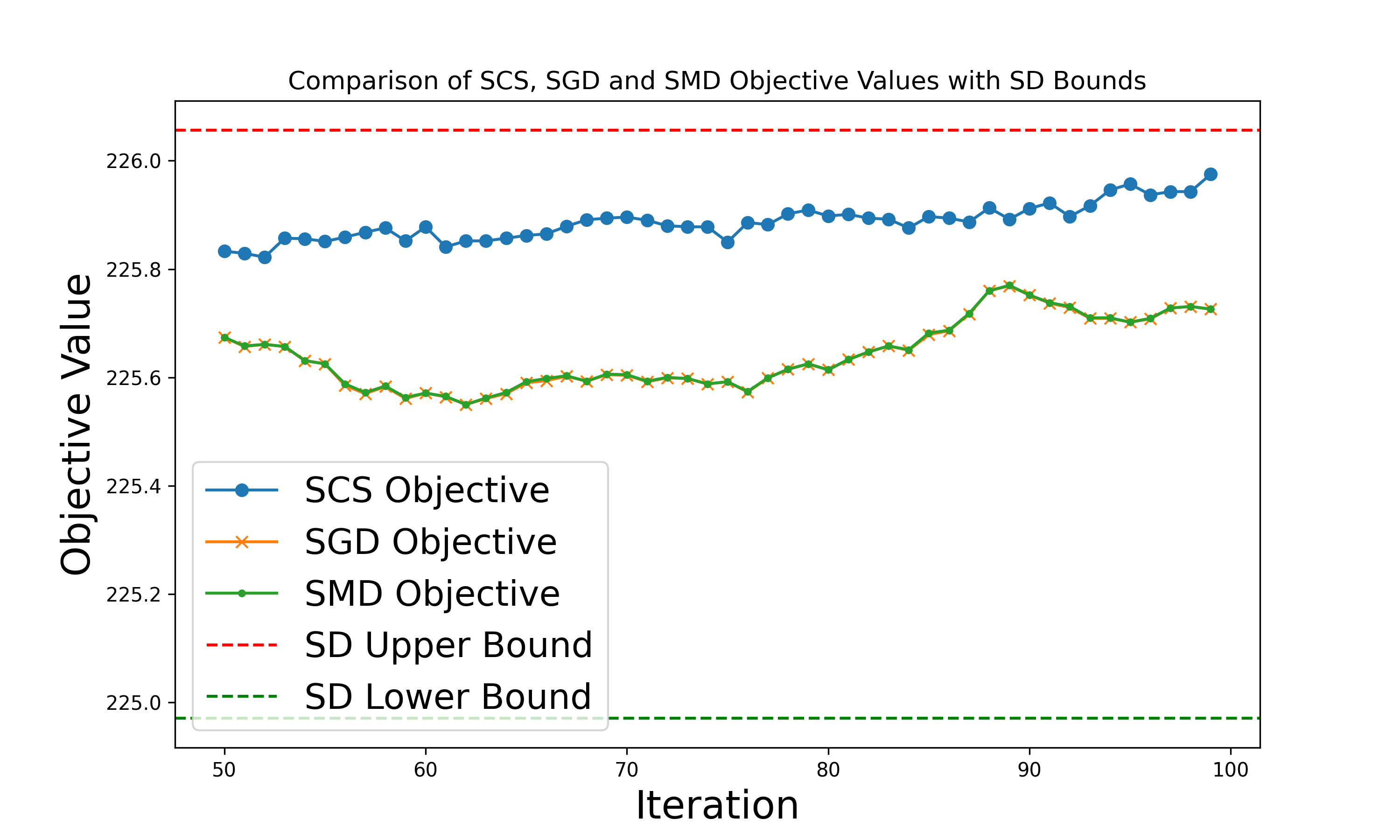}
\caption*{LandS3}
\end{minipage}
\begin{minipage}[t]{0.45\textwidth}
\centering
\includegraphics[width=7cm]{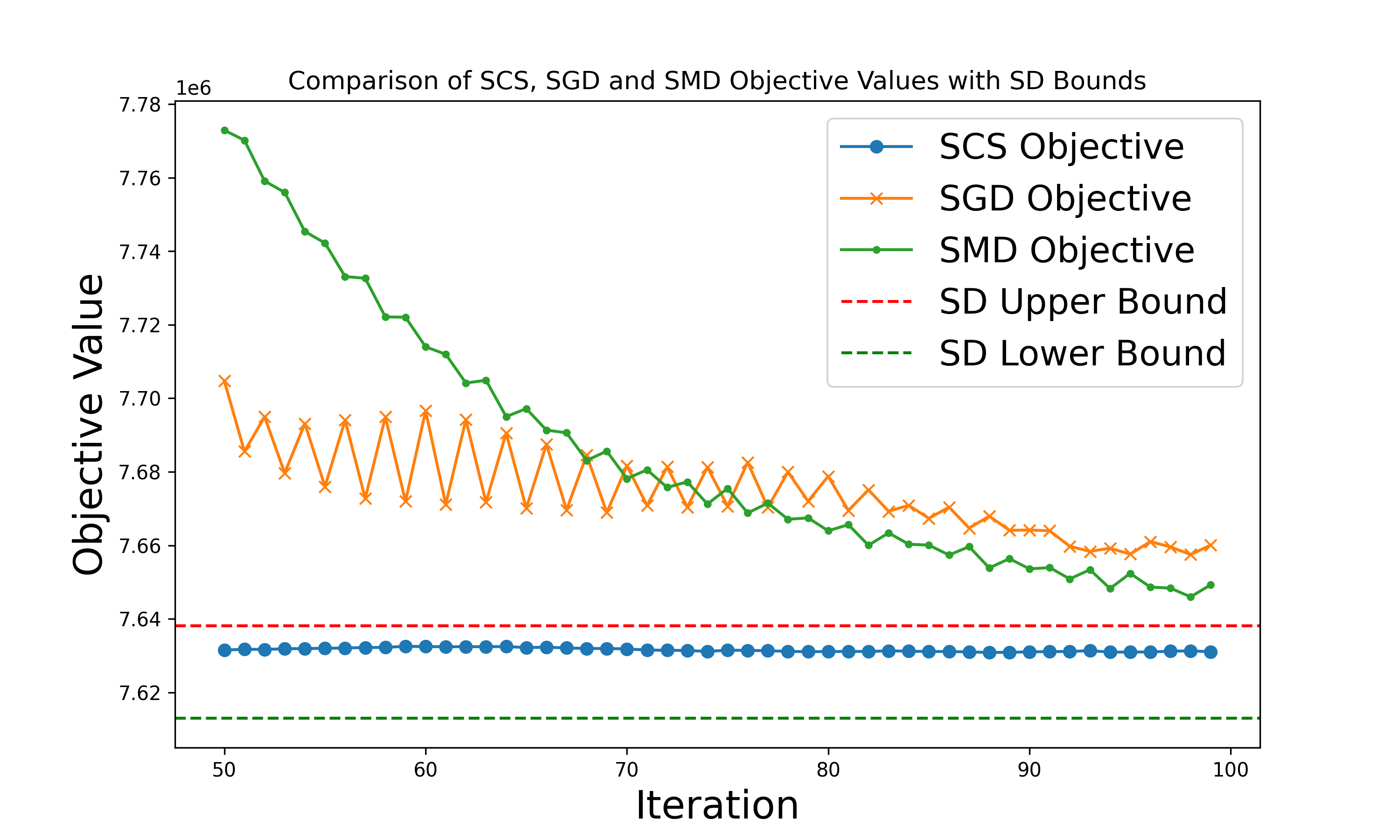}
\caption*{lgsc}
\end{minipage} \\

\begin{minipage}[t]{0.45\textwidth}
\centering
\includegraphics[width=7cm]{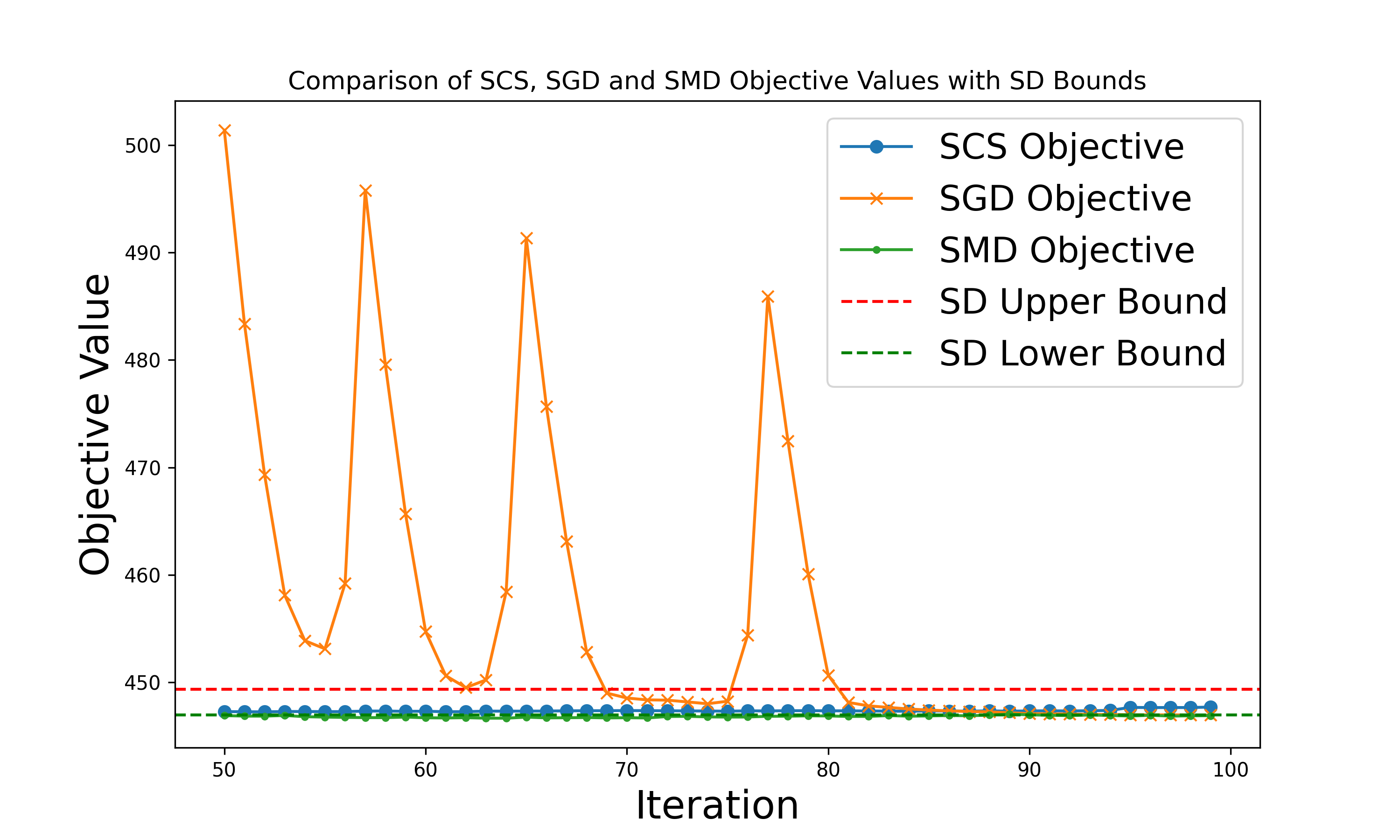}
\caption*{pgp2}
\end{minipage}
\begin{minipage}[t]{0.45\textwidth}
\centering
\includegraphics[width=7cm]{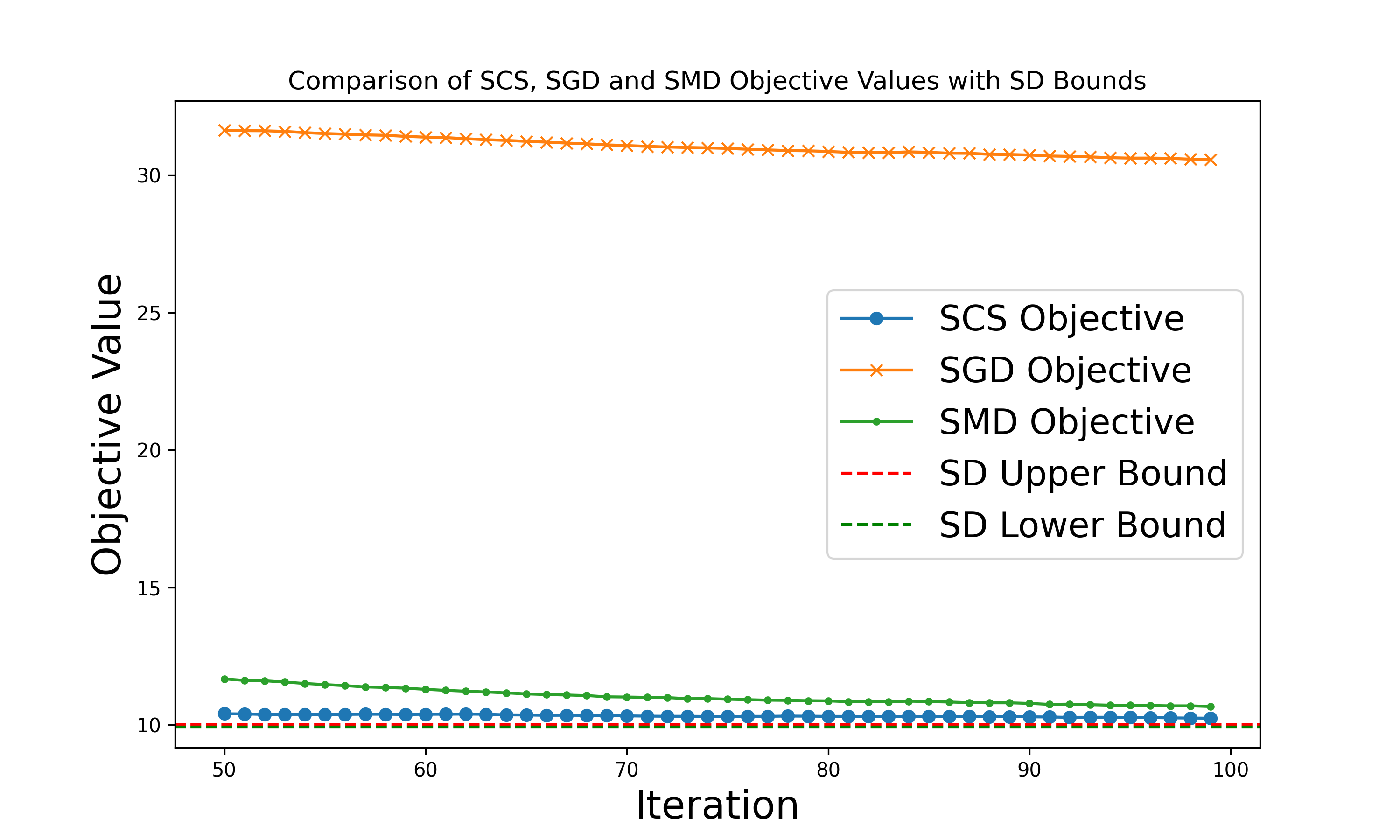}
\caption*{ssn}
\end{minipage}
\label{obj scs for sp last 50}
\end{figure}

\noindent \textbf{Remarks}: (i) The upper and lower bounds are derived from SD, which requires significantly more time to compute compared to the other algorithms. However, because these bounds are obtained through multiple replications, they provide a highly reliable estimate of the objective value, making them an excellent benchmark; (ii) Regarding the objective function value, SCS consistently yields lower values than the SGD and SMD algorithms, and it also converges faster; (iii) The SCS algorithm follows the principle of optimization, featuring a line-search procedure that automatically determines the step size. In contrast, other algorithms, such as SMD, require prior knowledge of the bound of the subgradient norm to set the step size, which is often impractical in many real-world applications.

\section{Conclusion}\label{Con}
Our approach leads to a class of online algorithms that go beyond first-order approximations. It incorporates several features of Wolfe's method (e.g., stability and a good convergence rate) as well as the online aspects of SGD, which promote good computational times. In contrast to the SGD algorithm, we find that the optimization performance of the SCS algorithm is more reliable and consistently provides lower objective values (see Figures \ref{obj scs for sp last 50}). It appears that such reliability may be difficult to achieve using first-order methods without additional fine-tuning efforts.\\
\bibliographystyle{abbrv}
\bibliography{main}   


\end{document}